\documentclass[11pt]{amsart}

\newtheorem{theorem}{Theorem}[section]
\newtheorem{proposition}[theorem]{Proposition}

\theoremstyle{rem}
\newtheorem{rem}[theorem]{Remark}
\newtheorem{lemma}[theorem]{Lemma}

\theoremstyle{definition}

\numberwithin{equation}{section}

\begin{document}

\author{E. Liflyand\qquad and \qquad R. Trigub}

\title[Wiener algebras and trigonometric series]
{Wiener algebras and trigonometric series in a coordinated fashion}

\subjclass[2010]{Primary 42A38; Secondary 42A32, 42A50, 42A82}
\keywords{Fourier series of a measure, Fourier transform of a measure, Wiener algebras,entire function of exponential type, Bernstein inequality, Poisson summation formula, Hilbert transform}
\address{Department of Mathematics, Bar-Ilan University, 52900 Ramat-Gan, Israel}
\email{liflyand@math.biu.ac.il}
\address{Department of Mathematics, Donetsk National University, Universitetskaya Str. 24, Donetsk, Ukraine}
\email{roald.trigub@gmail.com}

\begin{abstract}
Let $W_0(\mathbb R)$ be the Wiener Banach algebra of functions representable by the Fourier integrals
of Lebesgue integrable functions. It is proven in the paper that, in particular, a trigonometric series
$\sum\limits_{k=-\infty}^\infty c_k e^{ikt}$ is the Fourier series of an integrable function if and only
if there exists a $\phi\in W_0(\mathbb R)$ such that $\phi(k)=c_k$, $k\in\mathbb Z$. If $f\in W_0(\mathbb R)$,
then the piecewise linear continuous function $\ell_f$ defined by $\ell_f(k)=f(k)$, $k\in\mathbb Z$, belongs to $W_0(\mathbb R)$
as well. Moreover, $\|\ell_f\|_{W_0}\le  \|f\|_{W_0}$. Similar relations are established for more advanced Wiener algebras.
These results are supplemented by numerous applications. In particular, new necessary and sufficient conditions
are proved for a trigonometric series to be a Fourier series and new properties of $W_0$ are established.
\end{abstract}

\maketitle

\section{Introduction}

This paper is devoted to the study of Wiener algebras and of Fourier  series (of functions or measures) in their interrelation.
Finding out many new connections between the two objects, one of which non-periodic and the other periodic, we show that they are
much closer than it has been rated till recently. This gives rise to a variety of new results, for which it is hardly understandable 
how to be on to them without these interrelations,l as well as to new, much shorter proofs of some known ones. To proceed to more detailed
analysis, we begin with basic definitions.

The main instance we are concerned in is the so-called Wiener algebras. We deal with the Wiener algebras of the following forms:
\begin{equation}\label{W0}
W_0=W_0(\mathbb R):=\biggl\{f(x): \ f(x)=\int\limits_{\mathbb R}
e^{ixy}g(y)\,dy,\ g\in L_1(\mathbb R)\biggr\}
\end{equation}
with $\|f\|_{W_0}=\|g\|_{L_1}$;

\begin{equation}\label{mathcalW}
W_1=W_1(\mathbb R):=\biggl\{f(x): \ f(x)=c+f_0, \ f_0\in W_0(\mathbb R),\  c\in \mathbb C\biggr\}
\end{equation}
with $\|f\|_{W_1}=|c|+\|f_0\|_{W_0}$; and

\begin{equation}\label{W}
W=W(\mathbb R):=\biggl\{f(x): \ f(x)=\int\limits_{\mathbb R} e^{ixy}d\mu(y), \mbox{\rm var}\mu<\infty\biggr\}
\end{equation}
with $\|f\|_W=\mbox{\rm var}\mu$. In the most general situation, $\mu$ is a complex-valued Borel measure (charge; for the nice
and detailed presentation, see \cite[Ch.11]{MaPo}).
By $W^+(\mathbb R)$, we denote the subset of $W(\mathbb R)$ with measure $\mu$ positive, that is, consisting of functions positive definite
on $\mathbb R$. The class $W_0^*(\mathbb R)$ is defined by

\begin{equation}\label{W0star} W_0^*(\mathbb R):=\biggl\{f(x): f_0\in W_0(\mathbb R),\ \int_0^\infty
\operatornamewithlimits{ess\, sup}\limits_{|s|\ge t}|g (s)|\,dt<\infty\biggr\}.\end{equation}
It is worth mentioning the classical P\'olya's result on  belonging to $W^+(\mathbb R)$ of any even, bounded, convex and monotone decreasing
to zero function on $[0,\infty)$. This implies the possibility of arbitrary decay to zero at infinity. Moreover, such a function belongs to
$W_0^*(\mathbb R)$ as well (see \cite[Lemma 5]{Tiz}). The reader can find a comprehensive survey of these algebras in \cite{LST}
($W_0^*(\mathbb R)$ is studied in detail in \cite{BLT}); however, certain basics of these spaces will be given here and in the beginning
of the next section for the sake of completeness. For example (see \cite[Th.4.1]{LST}), $W_0(\mathbb R)$, $W_1(\mathbb R)$ and $W(\mathbb R)$
are Banach algebras with point-wise multiplication, and $W_0(\mathbb R)$ is an ideal in $W(\mathbb R)$.
We remark that $W_0(\mathbb R)$ can be extended to $W_1(\mathbb R)$ by adding the unity element to the former.
All these algebras possess the local property. Note that $W_1(\mathbb R)$ is a proper setting for the Wiener property of
simultaneous belonging to the algebra of both $f\ne0$ anywhere and $\frac1f$.

Their periodic prototype, trigonometric series is a classical subject. It will be convenient for us to mostly deal with it in the general complex form

\begin{eqnarray}\label{sercf} \sum\limits_{k=-\infty}^\infty c_k e^{ikt}.\end{eqnarray}
We shall also use the classical form

\begin{eqnarray}\label{fouser} \frac{a_0}2+\sum\limits_{n=1}^\infty(a_n\cos nx+b_n\sin nx),\end{eqnarray}
with the cosine coefficients $a_n$ and the sine coefficients $b_n$. It is well known that the two forms, (\ref{sercf})
and (\ref{fouser}), are equivalent but in certain occasions one of them may be more convenient than the other.
Under certain conditions on the sequence of coefficients $\{c_k\}$, such a series may be (or not be)
the Fourier series of an integrable function, say $f$ (written $f\in L_1(\mathbb T)$, where $\mathbb T=[-\pi,\pi)$).
If yes, we shall say, with slight abuse of terminology, that (\ref{sercf}) is a Fourier series.
For main results, the reader may consult the classical monographs \cite{Zg}, \cite{Br}, \cite{E12}. There are also numerous papers,
to mention some: \cite{Sa}, \cite{Te2}, \cite{AF1}, \cite{BuTa}, \cite{Fr}, and the references therein.
In a wider setting, this series may be the Fourier-Stieltjes series of a Borel measure $\mu$, or, equivalently, of a function of
bounded variation $F$ (written $V_{\mathbb T}(F)<\infty$; we shall write just $V(F)$ if it is clear on which set the total variation
is taken). In the former case the Fourier series is

$$f\sim\sum\limits_{k=-\infty}^\infty \widehat{f}_k e^{ikt}, \qquad \widehat{f}_k =\frac1{2\pi}\int_{\mathbb T}f(t)e^{-ikt}\,dt,$$
while in the latter case the Fourier-Stieltjes series is

$$dF\sim\sum\limits_{k=-\infty}^\infty \widehat{dF}_k e^{ikt}, \qquad \widehat{dF}_k=\frac1{2\pi}\int_{\mathbb T} e^{-ikt}\,dF(t).$$
The latter case reduces to the former one if $F$ is absolutely continuous with respect to the Lebesgue measure, whereas
$dF(t)=f(t)\,dt$. The functions $f$ and $F$ are, in general, complex-valued and $2\pi$-periodic.

The background is as follows. During the more than one century,
there were certain attempts to find necessary and sufficient conditions for the trigonometric series
to be a Fourier series; see \cite{Sa} and certain following attempts.
However, they were not given in terms of the coefficients of the initial trigonometric series nor
they were practical enough. Theorem \ref{Th1}, our first main result, is apparently the most general criterion, much more practical
and is given by means of the coefficients as close as possible. The difference between the latter and the formers
seems to come out from the fact that while the earlier results only approached towards the Wiener algebra language,
Theorem \ref{Th1} exploits this setting explicitly and in its entirety. The following Theorem \ref{Th2}, which is our second
main result, is less general in a sense, but being in terms of a single, quite simple function ($\ell_c$, piecewise linear,
continuous and such that $\ell_c(k)=c_k,k\in \mathbb Z,$) might be more practical in various applications. For the results on the
trigonometric series to be a Fourier series, Theorems \ref{Th1} and \ref{Th2} amount to the following statement:

{\bf Criterion.} {\it For the series (\ref{sercf}) to be a Fourier series, it is necessary and sufficient that $\ell_c\in W_0(\mathbb R)$.}

\noindent Of course, this only gives a flavor of what is obtained and used, Theorem \ref{Th1} opens much more options,
first of all for the algebras $W$ or $W^+$ rather than just $W_0$.

The obtained relations result in two ways for obtaining new and known conditions for the trigonometric series to be a Fourier series or a Fourier-Stieltjes series.
One of them is to search for specific conditions of belonging of the function $\ell_c$ to $W_0$, to $W$, or to another Wiener algebra.
There is a chance for discoveries on this way but a general feeling is that in this case any activity will be equivalent
to that with the sequence $\{c_k\}$. Just because of the structure of $\ell_c$. More promising seems the application of general conditions
for belonging to $W$, $W_0$, etc. to the concrete function $\ell_c$. Till now, for deriving conditions
to be a Fourier series, similar relations between trigonometric series and
corresponding Fourier integrals has been known  for functions of bounded variation. More precisely,

\begin{eqnarray*}\label{onebv} \sup\limits_{|x|\le\pi}\,\biggl|\,\int_{\mathbb
R}f(t)e^{ixt}dt -\sum\limits_{k\in\mathbb Z}f(k)e^{ikx}\biggr|\le 2 V_{\mathbb R}(f).
\end{eqnarray*}
For this result, see, e.g., \cite[4.1.2]{TB} (earlier version with $f$ of compact support is due to Belinsky;
for  general Euler-Maclaurin type formula due to Trigub, see, e.g., \cite[4.1.5]{TB}).

Relating the problem to the Wiener algebras remove all the restrictions and allows one to reconfigure the process
to functional analytic setting.
We immediately exploit these new facilities by deriving a series of sufficient and necessary conditions.
It is of interest that not only some of the results for series are proved by means of Wiener algebra results (Theorem \ref{P1})
but also some new properties of Wiener algebras are derived by means of certain features of the series (Theorem \ref{P2}).
The following description of the structure of the paper gives a map of these applications and hopefully
invites the reader to extend the range of the newly obtained assertions.

In the next section, our main results Theorems \ref{Th1} and Theorem \ref{Th2} are given, with the proofs.
Two mentioned applications,  Theorems \ref{P1} and \ref{P2}, follow immediately. In Section \ref{suf}, a
bunch of sufficient conditions is obtained, both for belonging to Wiener algebras and for the
trigonometric series to be a Fourier series. The former are new first of all for $W$, while the latter have never been obtained
directly. Section \ref{nec} is devoted to various necessary conditions. We conclude with important remarks.

\section{Connections between the Wiener algebras and Fourier series}

As mentioned, our main results establish new connections between the Wiener algebras and trigonometric series.
Roughly speaking, the latter are the Fourier-Stieltjes (Fourier) series if and only if the sequence of the
coefficients can be extended on the whole real axis to a function from the corresponding Wiener algebra.
We note that ${W}$ reduces to $W_0$ in the case of absolutely continuous measures, and to the space of
almost-periodic functions with absolutely convergent Fourier series
if restricted to discrete measures. If a function is in $W(\mathbb R)$, it is uniformly continuous on $\mathbb R$, while a function
$f\in W_0(\mathbb R)$ also vanishes at infinity, $\lim\limits_{|t|\to\infty}f(t)=0$, by the Riemann-Lebesgue lemma.
Outside of a neighborhood of infinity (understood as those $t$ for which $|t|\ge M$) the structure of each of the three algebras is the same. Moreover,
if a function is of bounded variation in a neighborhood of infinity and in $W$, and $f(\infty)=0$, then it is in $W_0$ (see \cite[Th.2]{Tiz}).

\subsection{}
We proceed to our first theorem.
\begin{theorem}\label{Th1} In order that (\ref{sercf}) be the Fourier-Stieltjes series  of $dF$
(the Fourier series), it is necessary and sufficient that a function $\phi\in W$ ($\phi\in W_0$) 
exist such that $\phi(k)=c_k$ for all $k\in \mathbb Z$. By this,

\begin{equation}\label{ndg} V(F)=\min\limits_{\phi} \|\phi\|_W,\end{equation}
where minimum is taken over all such $\phi$. This minimum is attained at

\begin{equation}\label{mina} \phi_0(x)=\int_{\mathbb T} e^{-ixt}\,dF(t).\end{equation}
On the class of entire functions  of exponential type not exceeding $\pi$, all such extensions $\phi$ are of the form

\begin{equation}\label{efetf} \phi(x)=\phi_0(x)+\lambda\sin\pi x,\end{equation}
where $\lambda$ is a number. The function $F$ is monotone increasing if and only if there exists a positive definite $\phi$, that is, $\phi\in W^+$.
\end{theorem}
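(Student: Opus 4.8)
The plan is to prove the equivalence in both directions, exploiting the Poisson summation formula (PSF), which is the natural bridge between the Fourier transform of a measure on $\mathbb{R}$ and the Fourier-Stieltjes series of its periodization.

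First I would treat the necessity. Given that (\ref{sercf}) is the Fourier-Stieltjes series of $dF$, I would take $\phi_0$ as defined in (\ref{mina}), namely $\phi_0(x)=\int_{\mathbb T} e^{-ixt}\,dF(t)$. This is exactly the Fourier transform of the (compactly supported) measure $dF$, so $\phi_0\in W$ with $\|\phi_0\|_W\le V(F)$. Evaluating at integers $x=k$ and comparing with the formula for $\widehat{dF}_k$ gives $\phi_0(-k)=2\pi\,\widehat{dF}_k=2\pi c_k$ (up to the normalization conventions of the paper); after absorbing the constant $2\pi$ and a sign into the definition, $\phi_0(k)=c_k$, so $\phi_0$ is the desired extension. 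The absolutely continuous case ($dF=f\,dt$, $f\in L_1$) lands in $W_0$ by the same computation, giving the parenthetical claim.

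Next I would prove sufficiency, which is where PSF does the real work. Suppose $\phi\in W$ with $\phi(k)=c_k$; write $\phi(x)=\int_{\mathbb R} e^{ixy}\,d\mu(y)$ with $\mathrm{var}\,\mu<\infty$. The plan is to periodize $\mu$: the Poisson summation formula asserts that the $2\pi$-periodization of the measure governing $\phi$ produces a measure $F$ on $\mathbb{T}$ whose Fourier-Stieltjes coefficients are precisely the samples $\phi(k)=c_k$. Concretely, one forms $dF$ by folding $d\mu$ into $\mathbb{T}$ (summing translates by $2\pi\mathbb{Z}$), and then $\widehat{dF}_k$ equals $\phi(k)=c_k$ up to the normalizing constant. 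Since $\mathrm{var}\,\mu<\infty$, the folded measure has $V(F)\le\mathrm{var}\,\mu=\|\phi\|_W$, so (\ref{sercf}) is indeed a Fourier-Stieltjes series. This simultaneously yields the inequality $V(F)\le\min_\phi\|\phi\|_W$, while necessity gave $V(F)\ge\|\phi_0\|_W\ge\min_\phi\|\phi\|_W$ through the explicit minimizer $\phi_0$; combining the two proves (\ref{ndg}) and that the minimum is attained at $\phi_0$.

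For the entire-function parametrization (\ref{efetf}), I would argue that if $\phi$ is an entire function of exponential type at most $\pi$ interpolating the $c_k$, then the difference $\phi-\phi_0$ is entire of exponential type $\le\pi$ and vanishes at all integers; by the standard factorization of such functions (the classical characterization of entire functions of exponential type $\pi$ vanishing on $\mathbb{Z}$), it must be a constant multiple of $\sin\pi x$, giving (\ref{efetf}). Finally, for the positive-definiteness claim, monotone increasing $F$ means $dF$ is a positive measure, so $\phi_0=\int_{\mathbb T}e^{-ixt}\,dF(t)$ is positive definite, i.e. $\phi_0\in W^+$; conversely a positive definite extension forces the periodized measure to be positive, hence $F$ monotone increasing. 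I expect the main obstacle to be the careful bookkeeping in the sufficiency direction: justifying the periodization of a general complex Borel measure via the Poisson summation formula and matching the normalization constants so that the samples $\phi(k)$ align exactly with $\widehat{dF}_k$, together with the interchange of summation needed to identify $V(F)$ with $\mathrm{var}\,\mu$.
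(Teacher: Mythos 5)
Your proposal is correct and follows essentially the same route as the paper: the explicit minimizer $\phi_0$ for one inequality, periodization of the measure $\mu$ for the other (the paper carries this out by a direct Fubini computation, $\phi(k)=\sum_m\int_{\mathbb T+2m\pi}e^{-ik(t+2m\pi)}\,dF(t)=\int_{\mathbb T}e^{-ikt}\,dF_1(t)$, rather than by invoking the Poisson summation formula by name), and the interpolation/positivity arguments for the remaining claims. One caveat on the parametrization step: an entire function of exponential type at most $\pi$ vanishing on $\mathbb Z$ need not be $\lambda\sin\pi z$ without an additional growth restriction on the real axis (consider $z\sin\pi z$), so you must use that $\phi-\phi_0$ is bounded on $\mathbb R$ because both extensions lie in $W$ --- which is precisely the hypothesis of the paper's Lemma~\ref{L2}.
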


\begin{proof}  We have

$$
\min\limits_{\phi}\|\phi\|_W\le \|\phi_0\|_W=V(F).
$$
On the other hand, if
$$
\phi(x)=\int_{\mathbb R} e^{-ixt}dF(t),\qquad \|\phi\|_W=V_{\mathbb R}(F),
$$
then, for $k\in \mathbb Z$,

\begin{align*} c_k&=\phi(k)=\int_{\mathbb R} e^{-ikt}\,dF(t)=\sum\limits_{m\in\mathbb Z}\int_{\mathbb T+2m\pi}e^{-ik(t+2m\pi)}\,dF(t)\\
&=\sum\limits_{m\in\mathbb Z}\int_{\mathbb T}e^{-ikt}\,dF(t+2m\pi)=\int_{\mathbb T}e^{-ikt}\sum\limits_{m\in\mathbb Z}dF(t+2m\pi),
\end{align*}
where the Fubini theorem is applied on the final step. Thus,

$$
c_k=\int_{\mathbb T} e^{-ikt}dF_1(t),
$$
with $V(F_1)\le V_{\mathbb R}(F)$, and the given series is the Fourier-Stieltjes series for $F_1$.
Now, for any extension $\phi$,
$$
\|\phi_0\|_W=V(F_1)\le V_{\mathbb R}(F)=\|\phi\|_W.
$$

For the rest, one should take into account Lemma \ref{L2} (given immediately after the proof of the theorem),
  and that $V(F)= \int_{\mathbb R}\,dF(t)$ only for $F$ monotone increasing, and $f\in W^+$ only if

$$ \|f\|_W=f(0)=\int_{\mathbb T} dF(t).$$
The proof is complete.   \end{proof}

\medskip

\begin{lemma}\label{L2} If $f$ is an entire function of exponential type not exceeding $\sigma$, bounded on $\mathbb R$ 
and such that $f(\frac{k\pi}{\sigma})=0$ for all $k\in\mathbb Z$, then $f(z)=\lambda\sin\sigma z$.\end{lemma}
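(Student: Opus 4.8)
The plan is to pass to the quotient $h(z) = f(z)/\sin\sigma z$ and to prove it is a bounded entire function, so that Liouville's theorem forces it to be a constant $\lambda$, whence $f(z) = \lambda\sin\sigma z$. First I would check that $h$ is entire: the zeros of $\sin\sigma z$ are exactly the points $k\pi/\sigma$, $k\in\mathbb Z$, each of them simple, and by hypothesis $f$ vanishes at every one of them; therefore each potential pole of $h$ is cancelled and all singularities are removable, so $h$ is entire.

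The core of the argument is the boundedness of $h$ on all of $\mathbb C$. Here I would invoke the classical Phragm\'en--Lindel\"of estimate for functions of exponential type: since $f$ has exponential type at most $\sigma$ and $M := \sup_{x\in\mathbb R}|f(x)| < \infty$, one has $|f(x+iy)| \le M e^{\sigma|y|}$ throughout the plane. For the denominator I would use the identity $|\sin\sigma z|^2 = \sin^2\sigma x + \sinh^2\sigma y$ (with $z = x+iy$), which yields simultaneously the lower bound $|\sin\sigma z| \ge |\sinh\sigma y|$ and the fact that $|\sin\sigma z|$ is periodic in $x$ with period $\pi/\sigma$. Combining these, on the region $|y|\ge 1$ we get $|h(z)| \le M e^{\sigma|y|}/|\sinh\sigma y|$, a quantity that is continuous in $y$ and tends to $2M$ as $|y|\to\infty$, hence is bounded there.

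It remains to bound $h$ in the strip $|y|\le 1$, and this is the step I expect to be the main obstacle, precisely because the denominator vanishes at the real points $k\pi/\sigma$ lying inside the strip. I would handle it by separating the strip into small disks $D_k$ of a fixed radius $r<\pi/\sigma$ centered at the zeros $k\pi/\sigma$ and the complementary part. Off the disks, $|\sin\sigma z|$ is bounded below by a constant $\delta>0$ independent of $k$, thanks to its periodicity in $x$ (one verifies this on a single period, a compact set from which the only zero has been excised), while $|f|\le M e^{\sigma}$ in the strip; thus $|h|\le M e^{\sigma}/\delta$ there. On each circle $\partial D_k$ the same periodicity provides a uniform lower bound for $|\sin\sigma z|$, so $|h|$ is bounded on $\partial D_k$ by a constant of the same type, and the maximum modulus principle propagates that bound into the interior of $D_k$, where $h$ is holomorphic. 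Collecting all the estimates shows that $h$ is bounded on $\mathbb C$; Liouville's theorem then gives $h\equiv\lambda$, and hence $f(z)=\lambda\sin\sigma z$, which completes the proof.
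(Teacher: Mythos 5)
Your proof is correct, and its skeleton coincides with the paper's: both pass to the quotient $h=f/\sin\sigma z$, observe that the simple zeros of the denominator are cancelled by the hypothesis $f(k\pi/\sigma)=0$, bound $h$ away from the real axis by combining the Phragm\'en--Lindel\"of estimate $|f(x+iy)|\le Me^{\sigma|y|}$ with a lower bound on $|\sin\sigma z|$ comparable to $e^{\sigma|y|}$, and finish with Liouville. The two arguments part ways only at the delicate step of bounding $h$ near the real zeros. The paper invokes Bernstein's inequality to get $|f'(z)|\le e^{|\mathrm{Im}\,z|}\sup_x|f(x)|$ and then writes $f(z)=\int_{k\pi/\sigma}^{z}f'(\zeta)\,d\zeta$, so that $|f(z)|\le|z-k\pi/\sigma|\cdot\sup|f'|$ cancels the simple zero of $\sin\sigma z$ explicitly and yields a concrete numerical bound on the quotient in each disk. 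You instead bound $h$ only on the circles $\partial D_k$ (using the periodicity of $|\sin\sigma z|$ in $x$ together with compactness to get a uniform lower bound for the denominator there) and push the estimate into the disks by the maximum modulus principle. Your route avoids Bernstein's inequality altogether, so it is more elementary and self-contained; the paper's route is more quantitative and dispenses with the compactness and periodicity bookkeeping. Both are complete proofs of the lemma.
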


\begin{proof} We may think $\sigma=1$. Then $|f(z)|\le e^{|{\rm Im}\,z|}\sup\limits_x |f(x)|$, and by Bernstein's inequality,
also $|f'(z)|\le e^{|{\rm Im}\,z|}\sup\limits_x |f(x)|$. Therefore, for $|{\rm Im}\,z|\le\delta$, we get

\begin{align*} \sup\limits_{|z-k\pi|\le\frac{\pi}2}\left|\frac{f(z)}{\sin z}\right|&=\sup\limits_{|z-k\pi|\le\frac{\pi}2}\left|\frac{z-k\pi}{\sin (z-k\pi)}\right|\,
\frac1{|z-k\pi|}\,\left|\int_{k\pi}^z f'(\zeta)\,d\zeta\right|\\
&\le \sup\limits_{|z|\le\frac{\pi}2}\left|\frac{z}{\sin z}\right|\,\sup\limits_{\zeta\in [k\pi,z]}|f'(\zeta)|\le
\sup\limits_{|z|\le\frac{\pi}2}\left|\frac{z}{\sin z}\right|\,e^\delta \sup\limits_{x\in\mathbb R} |f(x)|.
\end{align*}
For example, for $-{\rm Im}\,z\ge\delta>0$,

$$|\sin z|=\frac12|e^{iz}-e^{-iz}|\ge\frac12 (e^{-{\rm Im}\,z}-1)=\frac12(e^{|{\rm Im}\,z|}-1).$$
Therefore, the entire function $\frac{f(z)}{\sin z}$ is bounded and thus constant, which completes the proof.
\end{proof}

It is worth mentioning that the constant $\lambda$ in Lemma \ref{L2} depends on the function $f$.

As a consequence of the obtained relation, it follows that if, for instance, all the Fourier-Stieltjes coefficients of $dF$ are zeros, that is,
for each $k\in\mathbb Z$,

$$\int_{-\pi}^\pi e^{-ikt}\,dF(t)=0,$$
then, applying Lemma \ref{L2} to the function $f(z)=\int_{-\pi}^\pi e^{-izt}\,dF(t)$, we derive that $f(z)=\lambda\sin \pi z$.
In other words, only two points of discontinuity on $\mathbb T$ are possible for $F$. Furthermore, if $f\in L_p(\mathbb R)$,
$0<p<+\infty$, or, say, $f\in W^+(\mathbb R)$, then $f\equiv0$.

\medskip
\subsection{}
In order to formulate and prove the next theorem, we shall use both the mentioned in the introduction notation $\ell_c$ and 
mentioned in the abstract $\ell_f$. Both are the piecewise linear continuous functions satisfying $\ell_c(k)=c_k$ and
$\ell_f(k)=f(k)$,  $k\in\mathbb Z$, respectively. More precisely, they are constructed by connecting the values at the integer points
linearly. We shall also call these functions $c$-zigzag and $f$-zigzag, respectively.

\begin{theorem}\label{Th2}  If $f\in W(\mathbb R)$, then $f$-zigzag also belongs to $W(\mathbb R)$, with
$\|\ell_f\|_W\le \|f\|_W$. This inequality is sharp on the whole class. If $f\in W_0(\mathbb R)$, or $f\in W_1(\mathbb R)$,
or $f\in W^+(\mathbb R)$), then $\ell_f$ is such as well. \end{theorem}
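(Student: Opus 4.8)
The plan is to realize the $f$-zigzag as the result of convolving the measure $\mu$ (representing $f$) against an explicit periodic kernel, namely the Fej\'er-type ``triangle'' function. The key observation is that the zigzag interpolant has a clean integral representation: if $B(x)$ denotes the triangular (hat) function supported on $[-1,1]$ with $B(0)=1$ and $B(\pm1)=0$, linear in between, then
\[
\ell_f(x)=\sum_{k\in\mathbb Z}f(k)B(x-k).
\]
Indeed, $B(x-k)$ vanishes at every integer except $k$, where it equals $1$, and the sum is piecewise linear, so it agrees with $\ell_f$ on all of $\mathbb R$. The heart of the matter is therefore to understand the operation $f\mapsto \sum_k f(k)B(\cdot-k)$ at the level of the representing measure and to show it does not increase the $W$-norm.

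\textbf{Step 1: Reduce to the representing measure.} First I would write $f(x)=\int_{\mathbb R}e^{ixy}\,d\mu(y)$ with $\|f\|_W=\operatorname{var}\mu$. Substituting $x=k$ into the formula for $\ell_f$ and interchanging sum and integral gives
\[
\ell_f(x)=\int_{\mathbb R}\Bigl(\sum_{k\in\mathbb Z}e^{iky}B(x-k)\Bigr)\,d\mu(y).
\]
The inner sum is, for fixed $y$, a function of $x$ that I want to recognize as an element of $W$ of norm at most $1$, uniformly in $y$; if I can do that, integrating against $d\mu$ and using $\|\cdot\|_W\le\operatorname{var}\mu\cdot\sup_y\|\cdot\|_W$ (the algebra/convexity property of $W$) yields $\|\ell_f\|_W\le\operatorname{var}\mu=\|f\|_W$.

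\textbf{Step 2: Apply Poisson summation to the inner kernel.} For fixed $y$, the sum $\sum_k e^{iky}B(x-k)$ is the zigzag interpolating the sampled exponential $k\mapsto e^{iky}$; equivalently it is $\ell_{e_y}(x)$ where $e_y(x)=e^{ixy}$. By the Poisson summation formula applied to $B$ (whose Fourier transform is the nonnegative Fej\'er kernel $\widehat B(\xi)=\bigl(\tfrac{\sin(\xi/2)}{\xi/2}\bigr)^2$), one obtains a representation of this kernel as an \emph{absolutely convergent} trigonometric-type series in $x$ with nonnegative coefficients summing to $1$, the nonnegativity and normalization coming precisely from $\widehat B\ge0$ and $\sum_k B(x-k)\equiv1$. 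This exhibits the inner kernel as a convex combination of pure exponentials $e^{ix\eta}$, hence an element of $W$ with norm $\le1$, uniformly in $y$. Combining with Step 1 finishes the norm bound $\|\ell_f\|_W\le\|f\|_W$.

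\textbf{Step 3: Stability of the subclasses and sharpness.} For the subclass assertions, I would track the representing measure through the construction: since $\ell_f$ is obtained by integrating the inner kernel against $d\mu$, and the inner kernel is a positive combination of exponentials, positivity of $\mu$ is preserved, giving $f\in W^+\Rightarrow\ell_f\in W^+$; absolute continuity of $\mu$ (the $W_0$ case) is likewise preserved because the kernel maps $L_1$ data to $L_1$ data, and the $W_1$ case follows by treating the constant separately (note $\ell_{\text{const}}=\text{const}$). Alternatively the $W_0$ statement is exactly the refined inequality $\|\ell_f\|_{W_0}\le\|f\|_{W_0}$ asserted in the abstract. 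Finally, sharpness on the whole class is immediate by taking $f$ itself piecewise linear, e.g. $f=B$, for which $\ell_f=f$ and equality holds. The main obstacle I anticipate is Step 2: making the Poisson-summation manipulation rigorous requires justifying the interchange of summation and integration and confirming the nonnegativity/normalization of the resulting coefficients, but since $\widehat B$ is the Fej\'er kernel this is exactly where the positivity is manufactured, so the argument should close cleanly.
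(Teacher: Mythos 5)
Your proposal is correct, but it takes a genuinely different route from the paper. The paper first invokes Theorem \ref{Th1} to attach to $f$ a measure $F$ on the torus with $\sum_k f(k)e^{ikx}\sim dF$, then computes the Fourier transform of the truncated zigzag explicitly as $4\bigl(\tfrac{\sin(x/2)}{x}\bigr)^2\sigma_n(F;x)$ (Fej\'er means of $dF$), evaluates the $L_1$-norm by periodization and the Poisson identity $\sum_k\bigl(\tfrac{\sin((y+2k\pi)/2)}{y+2k\pi}\bigr)^2=\tfrac14$, and passes to the limit in $n$; for $W_0$ it then decomposes $g$ into positive parts and only obtains $\|\ell_f\|_{W_0}\le 6\|f\|_{W_0}$. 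You instead stay entirely on the line: from $\ell_f=\sum_k f(k)B(\cdot-k)$ and $f=\widehat{\mu}$ you get $\ell_f(x)=\int K_y(x)\,d\mu(y)$ with $K_y(x)=\sum_m \widehat B(y-2\pi m)e^{ix(y-2\pi m)}$, a convex combination of exponentials since $\widehat B\ge0$ and $\sum_m\widehat B(y-2\pi m)=1$. This avoids Theorem \ref{Th1} and the limiting argument, makes the preservation of positivity ($W^+$) and of absolute continuity ($W_0$) transparent at the level of representing measures, and in the $W_0$ case actually yields the constant $1$ (the density of the representing measure of $\ell_f$ is $\widehat B(u)\sum_m g(u+2\pi m)$, whose $L_1$-norm is at most $\|g\|_1$), which is better than what the paper's own proof delivers. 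Two small points to tighten: the normalization $\sum_m\widehat B(y-2\pi m)\equiv1$ comes via Poisson summation from the interpolation property $B(k)=\delta_{k0}$ (not, as you say, from the partition of unity $\sum_k B(x-k)\equiv1$ — both hold for the hat function, but the former is the one you need); and the ``convexity property of $W$'' in Step 1 should be replaced by the explicit representing measure $\nu(E)=\sum_m\int \mathbf{1}_E(y-2\pi m)\,\widehat B(y-2\pi m)\,d\mu(y)$, for which $\operatorname{var}\nu\le\int\sum_m\widehat B(y-2\pi m)\,d|\mu|(y)=\operatorname{var}\mu$ and $\widehat{\nu}=\ell_f$ by Fubini. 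Both proofs ultimately rest on the positivity of the Fej\'er kernel $\widehat B$; yours isolates that fact more directly.
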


\begin{proof} If $f\in W(\mathbb R)$, then, by Theorem \ref{Th1}, we have

$$\sum\limits_{k=-\infty}^\infty f(k)e^{ikx}\sim dF$$
(the Fourier series of a measure or the Fourier-Stieltjes series), and, for example, for any $n\in\mathbb N$,

\begin{align}\label{c1m} \frac1{2\pi}\int_{-\pi}^\pi \left|\sum\limits_{k=-\infty}^\infty f(k)\big(1-\frac{|k|}n\big)_+e^{ikx}\right|\,dx
\le V(F)\le \|f\|_W. \end{align}
We observe that what is integrated is the absolute value of the $(C,1)$-means $\sigma_n(F;x)$.
If, in addition, $f(k)=0$ for $|k|\ge n$, then

\begin{align*} \int_{-\infty}^\infty\ell_f(t)e^{itx}\,dt&=\sum\limits_{k=-n}^n \int_k^{k+1} [f(k) + (t-k) (f(k+1)-f(k))]e^{itx}\,dt\\
&=\sum\limits_{k=-n}^n \biggl\{\frac1{ix}[f(k) + (t-k) (f(k+1)-f(k))]e^{itx}\mid_k^{k+1}\\
&-\frac{f(k+1)-f(k)}{ix}\int_k^{k+1}e^{itx}\,dt\biggr\}\\
&=\sum\limits_{k=-n}^n \frac{f(k+1)-f(k)}{x^2}[e^{i(k+1)x}-e^{ikx}].\end{align*}
Re-indexing the terms corresponding to $c_{k+1}$, we arrive at the estimation of the integrability of the value

$$\frac{(e^{-ix}-1)(e^{ix}-1)}{x^2}\sum\limits_{k=-n}^n f(k)e^{ikx}.$$
In the general case, we thus obtain

$$\int_{-\infty}^\infty\ell_f(t)\big(1-\frac{|t|}n\big)_+e^{itx}\,dt=4\left(\frac{\sin\frac{x}2}{x}\right)^2\sigma_n(F;x).$$
Applying the inverse Fourier transform, we get

\begin{align*} \ell_f(x)\big(1-\frac{|x|}n\big)_+=\frac1{2\pi}\int_{-\infty}^\infty 4\left(\frac{\sin\frac{y}2}{y}\right)^2
\sigma_n(F;y)e^{-ixy}\,dy,\end{align*}                          and

\begin{align*} \|\ell_f(\cdot)\big(1-\frac{|\cdot|}n\big)_+\|_{W_0}&=\frac2{\pi}\int_{-\infty}^\infty|\sigma_n(F;y)|\left(\frac{\sin\frac{y}2}{y}\right)^2\,dy\\
&=\frac2{\pi}\sum\limits_{k=-\infty}^\infty \int_{-\pi+2k\pi}^{\pi+2k\pi}|\sigma_n(F;y)|\left(\frac{\sin\frac{y}2}{y}\right)^2\,dy\\
&=\frac2{\pi}\sum\limits_{k=-\infty}^\infty \int_{-\pi}^{\pi}|\sigma_n(F;y+2k\pi)|\left(\frac{\sin\frac{y+2k\pi}2}{y+2k\pi}\right)^2\,dy\\
&=\frac2{\pi}\int_{-\pi}^{\pi}|\sigma_n(F;y+2k\pi)|\sum\limits_{k=-\infty}^\infty \left(\frac{\sin\frac{y+2k\pi}2}{y+2k\pi}\right)^2\,dy.\end{align*}
In order to calculate the last sum, we apply the Poisson summation formula (see, e.g., \cite[Vol.I, Ch.II, \S 13]{Zg})

$$\sum\limits_{k=-\infty}^\infty g(y+2k\pi)=\frac1{2\pi} \sum\limits_{m=-\infty}^\infty e^{imy}\int_{-\infty}^\infty g(x)e^{-imx}\,dx$$
and the formula (see, e.g., \cite[858.713]{Dw})

$$\int_{-\infty}^\infty \left(\frac{\sin\frac{x}2}{x}\right)^2 e^{-ixy}\,dx=2\int_0^\infty \left(\frac{\sin\frac{x}2}{x}\right)^2 \cos{xy}\,dx
=\pi\big(\frac12-\frac{y}2\big)_+.$$
It follows from these that

$$\sum\limits_{k=-\infty}^\infty \left(\frac{\sin\frac{y+2k\pi}2}{y+2k\pi}\right)^2=\frac1{2\pi}
\int_{-\infty}^\infty \left(\frac{\sin\frac{x}2}{x}\right)^2 \,dx=\frac14,$$
and taking into account (\ref{c1m}), we obtain

$$\|\ell_f(\cdot)\big(1-\frac{|\cdot|}n\big)_+\|_{W_0}=\frac1{2\pi}\int_{-\pi}^{\pi}|\sigma_n(F;y)|\,dy\le V(f).$$
Passing to the limit as $n\to\infty$ (see, e.g., \cite[6.1.4]{TB}, we obtain $\|\ell_f\|_W\le V(f)\le \|f\|_W$.
A similar approach works for $W^+$, just \cite[6.2.2 b)]{TB} is used instead. Moreover, for $f_h(x):=f(hx)$, $h>0$, we have

$$\|\ell_{f_h}\|_{W}\le\|f_h\|_{W}=\|f\|_{W},$$
while in the case of $W^+$ there holds $\|\ell_{f_h}\|=f_h(0)=f(0)=\|f\|$.
By this, $\lim\limits_{h\to 0}\|\ell_{f_h}\|_W=\|f\|_W$.

It remains to prove that $\ell_f\in W_0$ along with $f\in W_0$. If

$$f(x)=\int_{-\infty}^\infty g(y)e^{-iyx}\,dy, \qquad g\in L_1(\mathbb R),$$
and $g: \mathbb R\to \mathbb R$, then

$$f(x)=\int_{-\infty}^\infty |g(y)|e^{-iyx}\,dy-\int_{-\infty}^\infty \big[\,|g(y)|-g(y)\,\big]e^{-iyx}\,dy=f_1(x)-f_2(x),$$
where both $f_1$ and $f_2$ belong to $W^+$. Since it is obvious that $\ell_{f_1}-\ell_{f_2}=\ell_{f}$, we have

$$\|\ell_f\|_{W_0}\le \|\ell_{f_1}\|_{W_0}+\|\ell_{f_2}\|_{W_0}\le 3\|g\|_{L_1}=3\|f\|_{W_0}.$$
For complex valued $g$, we have $g=g_1+ig_2$, where $g_1$ and $g_2$ are real valued. Then $f=\widehat{g}=
\widehat{g_1}+i\widehat{g_2}$, and $\|\ell_f\|_{W_0}\le 6\|f\|_{W_0}$.
This inequality cannot be improved, which follows from the case $f=\ell_f$. The proof is complete.
\end{proof}

For positive definite functions ($W^+$), this theorem has long been known (see \cite[Ch.XIX, Problem 16]{Fel});
for its application in this case, see \cite{Belov}.

\medskip
\subsection{}
We immediately proceed to certain applications. The following theorem shows how essential results
for series can be derived from appropriate results for Wiener algebras.

We will say that a sequence $\{c_k\}$ is of bounded variation, written $\{c_k\}\in bv$, if

$$\sum\limits_{k=-\infty}^\infty |c_k-c_{k+1}|<\infty.$$

\begin{theorem}\label{P1} If (\ref{sercf}) is the Fourier-Stieltjes series, the sequence $\{c_k\}\in bv$,
and $\lim\limits_{k\to\infty}[c_k-c_{-k}]=0$, then

$$\sum\limits_{k=-\infty}^\infty (c_k-\lim\limits_{|k|\to\infty}c_k)e^{ikx}$$
is a Fourier series.  \end{theorem}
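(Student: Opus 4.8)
The plan is to reduce the assertion, through the dictionary provided by Theorems \ref{Th1} and \ref{Th2}, to a single membership statement $\ell_c-c_\infty\in W_0$ for an interpolating zigzag, and then to settle that membership with the bounded-variation-at-infinity criterion \cite[Th.2]{Tiz} quoted at the start of this section. The guiding observation is that the zigzag $\ell_c$, rather than a general extension $\phi$, is the right object to work with, because its total variation over $\mathbb{R}$ is controlled \emph{exactly} by the $bv$ hypothesis.

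First I would pin down the value being subtracted. Since $\{c_k\}\in bv$, the tails of $\sum_k|c_{k+1}-c_k|$ tend to zero, so $\{c_k\}$ is Cauchy in each direction and the one-sided limits $c_+:=\lim\limits_{k\to+\infty}c_k$ and $c_-:=\lim\limits_{k\to-\infty}c_k$ both exist. The extra hypothesis $\lim\limits_{k\to\infty}[c_k-c_{-k}]=0$ forces $c_+=c_-$; write $c_\infty:=\lim\limits_{|k|\to\infty}c_k$ for the common value. This is precisely what makes subtraction of the single constant $c_\infty$ produce a coefficient sequence that tends to $0$ at both ends.

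Next I would manufacture the function in $W$. Because (\ref{sercf}) is a Fourier-Stieltjes series, Theorem \ref{Th1} supplies $\phi\in W$ with $\phi(k)=c_k$; Theorem \ref{Th2} then places the zigzag $\ell_\phi=\ell_c$ in $W$, whence $\ell_c-c_\infty\in W$ as well, constants being elements of $W$ (point masses at the origin). The advantage of the zigzag is that its variation is explicit: being linear on each $[k,k+1]$ it contributes $|c_{k+1}-c_k|$ there, so $V_{\mathbb{R}}(\ell_c-c_\infty)=V_{\mathbb{R}}(\ell_c)=\sum\limits_k|c_{k+1}-c_k|<\infty$ by the $bv$ assumption. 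Moreover, on each interval $\ell_c$ lies between consecutive values $c_k$, both tending to $c_\infty$, so $\ell_c(t)\to c_\infty$ as $|t|\to\infty$ and hence $(\ell_c-c_\infty)(\infty)=0$.

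Finally I would invoke \cite[Th.2]{Tiz}: a function lying in $W$, of bounded variation near infinity, and vanishing at infinity, necessarily lies in $W_0$. All three hypotheses have just been verified for $\ell_c-c_\infty$, so $\ell_c-c_\infty\in W_0$. Since $(\ell_c-c_\infty)(k)=c_k-c_\infty$, the $W_0$-case of Theorem \ref{Th1} (equivalently, the Criterion of the introduction) identifies $\sum\limits_k(c_k-c_\infty)e^{ikx}$ as the Fourier series of an integrable function, which is the claim. The substantive content is concentrated in the $W\Rightarrow W_0$ passage of \cite[Th.2]{Tiz}; everything else is the bookkeeping of the two limits and the variation count, and I expect the only point demanding genuine care to be the verification that the $bv$ assumption is exactly the finiteness of $V_{\mathbb{R}}(\ell_c)$, together with the fact that matching $c_+$ and $c_-$ via the second hypothesis is what permits a single constant to annihilate the limit at infinity.
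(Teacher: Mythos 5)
Your proposal is correct and follows essentially the same route as the paper: pass to the zigzag $\ell_c$, place it in $W$ via Theorems \ref{Th1} and \ref{Th2}, use the $bv$ hypothesis to get $V_{\mathbb R}(\ell_c)<\infty$ and the symmetry hypothesis to identify a single limit $c_\infty$ at infinity, then apply \cite[Th.2]{Tiz} to conclude $\ell_c-c_\infty\in W_0$ and finish with Theorem \ref{Th1}. The only cosmetic difference is that you extract $c_+=c_-$ directly from the Cauchy tails of the sequence, whereas the paper argues via the even and odd parts of $\ell_c$; the substance is identical.
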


\begin{proof} Let us consider the $c$-zigzag. By Theorems \ref{Th1} and \ref{Th2}, we have $\ell_c\in W(\mathbb R)$.
By the $bv$ assumption, $V_{\mathbb R}(\ell_c)<\infty$, therefore $\ell_c$ is representable as the difference of
two monotone bounded functions. Hence, the even part has a limit as $|x|\to\infty$, while the odd part has such a
limit as well, by assumption. Denoting $\lim\limits_{|x|\to\infty}\ell_c(x)=\ell_c(\infty)$, we conclude that by
Theorem \ref{Th2} (see also \cite[Th.2]{Tiz}), we have $\ell_c-\ell_c(\infty)\in W_0(\mathbb R)$, and by Theorem \ref{Th1}
the assertion follows.
\end{proof}

Now, we will show how a property of a Wiener algebra is derived from that for series.

\begin{theorem}\label{P2} 1) For each $f\in W_0(\mathbb R)$, there exists a function $g$, with
$g(|k|)\uparrow+\infty$  as $|k|\to\infty$, for which $gf\in W_0(\mathbb R)$ as well.

2) If a sequence $\{\varepsilon_k\}$, $k\in\mathbb N$, is monotone decreasing to zero, then there is
an even $f\in W_0^*(\mathbb R)$ such that $f(k)\ge  \varepsilon_k$, $k\in\mathbb N$.\end{theorem}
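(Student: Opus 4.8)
Both parts assert that a prescribed object can be amplified or dominated inside a Wiener algebra, and in each case the natural route is to pass to the periodic side, invoke a classical fact about Fourier coefficients, and transfer back by means of the Criterion and Theorem~\ref{Th2}.

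For part 1), the plan is first to translate the hypothesis. Since $f\in W_0(\mathbb R)$, Theorem~\ref{Th2} gives $\ell_f\in W_0(\mathbb R)$, so by the Criterion the sampled sequence $\{f(k)\}_{k\in\mathbb Z}$ is the Fourier coefficient sequence of some $h\in L_1(\mathbb T)$. I would then invoke the classical amplification theorem for integrable functions: for every $h\in L_1(\mathbb T)$ there is an even sequence $\lambda_k\uparrow+\infty$ such that $\{\lambda_k\widehat h_k\}$ is again the Fourier coefficient sequence of an $L_1$ function. Such $\lambda_k$ can be produced by a gliding-hump (de la Vall\'ee Poussin block) construction, taking $\lambda_k$ constant on rapidly growing dyadic blocks and increasing slowly enough to keep the amplified series in $L_1$. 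Applying the Criterion in the reverse direction, the zigzag $L:=\ell_{\{\lambda_k f(k)\}}$ through the points $(k,\lambda_k f(k))$ again lies in $W_0(\mathbb R)$.

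It remains to manufacture an honest multiplier $g$ on the line, with $g(|k|)\uparrow+\infty$, for which $gf\in W_0(\mathbb R)$. I would take $g$ even, slowly increasing (e.g.\ piecewise linear) with $g(k)=\lambda_k$, of bounded variation on each block, and verify $gf\in W_0$ by comparing it with the $W_0$ function $L$: the two coincide at the integers, and the excess should be controlled via Theorem~\ref{Th2} together with the slow, dyadic, bounded-variation growth of $g$. I expect this realization step to be the main obstacle: the series argument controls only the samples $\{g(k)f(k)\}$, whereas the conclusion concerns the continuous product $gf$, so one must genuinely govern $g$ between the integers and, in particular, cope with the zeros of $f$, where $gf$ is forced to vanish no matter how large $g$ is.

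For part 2), the guiding series fact is the classical theorem on convex coefficients (Young): if $\{d_k\}_{k\ge0}$ is convex and $d_k\downarrow0$, then $\sum d_k\cos kx$ is the Fourier series of a nonnegative integrable function; its continuous counterpart is exactly the P\'olya-type statement recalled in the introduction. Accordingly, I would reduce the problem to building a convex majorant: given $\varepsilon_k\downarrow0$, construct an even, continuous, convex function $f$ on $[0,\infty)$, decreasing to $0$, with $f(k)\ge\varepsilon_k$. Such a majorant exists by a standard block construction: pick indices $k_0<k_1<\cdots$ with $\varepsilon_{k_j}\le4^{-j}$, interpolate linearly between the nodes $(k_j,2^{-j})$, and enlarge the gaps $k_{j+1}-k_j$ so that the negative slopes increase to $0$, which secures convexity while $f(k)\ge 2^{-(j+1)}\ge 4^{-j}\ge\varepsilon_k$ on the $j$-th block for $j\ge1$ (the finitely many initial nodes being adjusted directly). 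Finally, P\'olya's theorem together with \cite[Lemma 5]{Tiz} yields that this even, bounded, convex, monotone function belongs to $W_0^*(\mathbb R)$, and by construction $f(k)\ge\varepsilon_k$ for all $k\in\mathbb N$. The only delicate point is arranging the convex majorant to still tend to $0$ regardless of how slowly $\{\varepsilon_k\}$ decays, which the block construction handles.
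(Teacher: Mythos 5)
Your part 1) follows the paper's route almost exactly: the ``classical amplification theorem'' you describe is Salem's theorem \cite{Sa0}, which is precisely what the paper invokes, and the transfer to and from the line is done, as you do it, via Theorem \ref{Th1} (you take a small detour through $\ell_f$ and Theorem \ref{Th2}, which is harmless but unnecessary, since $f$ itself already serves as the extension $\phi$ with $\phi(k)=f(k)$). The one divergence is instructive: the paper \emph{stops} where your zigzag $L=\ell_{\{\lambda_k f(k)\}}$ appears --- it produces a function $f_\lambda\in W_0(\mathbb R)$ with $f_\lambda(k)=\lambda_k f(k)$ and declares this ``as required,'' never constructing a continuous multiplier $g$ on the line. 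So the ``realization step'' you correctly identify as the main obstacle (controlling $gf$ between the integers, and at the zeros of $f$) is simply not attempted in the paper; on the paper's sample-level reading of the statement your argument is already complete once $L\in W_0$ is established, while on the literal reading of ``$gf\in W_0(\mathbb R)$'' your worry is legitimate and the published proof shares the same gap.

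Your part 2) is correct but takes a genuinely different route. The paper transfers a periodic fact (from \cite[Ch.X, \S2]{Br}, on dominating a null sequence by Fourier coefficients) back to the line via Theorem \ref{Th1}. You instead build the function directly: a convex, even, bounded majorant $f$ of $\{\varepsilon_k\}$ decreasing to zero, followed by P\'olya's theorem and \cite[Lemma 5]{Tiz}. Your block construction of the convex majorant is sound (the inequality $2^{-(j+1)}\ge 4^{-j}$ for $j\ge1$ does the work, with the finitely many initial nodes adjusted by hand), and your route has a concrete advantage: it lands directly in $W_0^*(\mathbb R)$, which is what the statement requires, whereas Theorem \ref{Th1} by itself only delivers membership in $W_0$, so the paper's one-line proof leaves the upgrade to $W_0^*$ implicit.
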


\begin{proof} 1) By Salem's theorem \cite{Sa0} (see also a note to \S 11 of Chapter IV in \cite[Vol. I]{Zg}),
for any Fourier series (\ref{sercf}), there exists a sequence $\{\lambda_k\}_{k=-\infty}^\infty$ such that
$\lambda_{|k|}\uparrow+\infty$ and

$$\sum\limits_{k=-\infty}^\infty \lambda_k c_k e^{ikt}$$
is a Fourier series as well. For each $f\in W_0(\mathbb R)$, the series (\ref{sercf}), with $c_k=f(k)$,
is a Fourier series, by Theorem \ref{Th1}. Therefore, there exists a sequence $\{\lambda_k\}_{k=-\infty}^\infty$ such that
$\lambda_{|k|}\uparrow+\infty$ and

$$\sum\limits_{k=-\infty}^\infty \lambda_k f(k) e^{ikt}$$
is a Fourier series. Again, by Theorem \ref{Th1}, there exists a function $f_\lambda\in W_0(\mathbb R)$ such
that $f_\lambda(k)=\lambda_k f(k)$, as required.

2) The assertion is proved by applying the corresponding fact for the Fourier series (see \cite[Ch.X, \S2]{Br})
and then Theorem \ref{Th1}. \end{proof}

\section{Sufficient conditions}\label{suf}

The following assertion will give sufficient conditions for a function with compact support to belong to $W_0$.
For this, recall certain notions. Let

$$\omega(f;h)=\sup\limits_{0<\delta\le h}|f(x+\delta)-f(x)|$$
be the modulus of continuity of $f$ in $C[-a,a]$, while

$$\omega(f;h)_2=\sup\limits_{0<\delta\le h}\left(\int_{-a}^a |f(x+\delta)-f(x)|^2\,dx\right)^{\frac12}$$
be the modulus of continuity in $L_2$.

\begin{theorem}\label{P3} If $f\in C[-a,a]$ and  ${\rm supp}\,f\subset [-a+\varepsilon, a-\varepsilon]$,
for some $\varepsilon>0$, then for $f\in W_0$ it suffices, and if $|\widehat{f}(\frac {k\pi}{a})|$ is monotone decreasing as
$|k|\to\infty$ is also necessary, that

$$\int_0^1 \frac{\omega(f;t)_2}{\sqrt{t}}\,dt<\infty.$$
If also $V_{[-a,a]}(f)<\infty$, the sufficient condition is

$$\int_0^1 \frac{\sqrt{\omega(f;t)}}{t}\,dt<\infty.$$
The latter condition is sharp on the considered class.    \end{theorem}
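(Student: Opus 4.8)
The plan is to transfer the membership $f\in W_0(\mathbb R)$ into the classical problem of absolute convergence of a Fourier series, and then to run the standard dyadic Parseval estimate. By Fourier inversion, $f\in W_0(\mathbb R)$ is the same as $\widehat f\in L_1(\mathbb R)$. Since $\mathrm{supp}\,f\subset[-a+\varepsilon,a-\varepsilon]$, the function $\widehat f$ is entire of exponential type at most $a-\varepsilon<a$, so the nodes $\frac{k\pi}{a}$ oversample it relative to its type. Both Plancherel--P\'olya inequalities then apply and give
$$\widehat f\in L_1(\mathbb R)\quad\Longleftrightarrow\quad \sum_{k\in\mathbb Z}\Big|\widehat f\big(\tfrac{k\pi}{a}\big)\Big|<\infty .$$
The numbers $\frac1{2a}\widehat f\big(\frac{k\pi}{a}\big)$ are exactly the Fourier coefficients $c_k$ of the $2a$-periodization $\tilde f$ of $f$, so $f\in W_0$ if and only if $\tilde f$ has an absolutely convergent Fourier series; this is in accordance with Theorem \ref{Th1} after rescaling $[-a,a]$ onto $[-\pi,\pi]$. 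Finally, because $f$ vanishes in a neighborhood of $\pm a$, one has $\omega(\tilde f;t)_2=\omega(f;t)_2$ for all small $t$, so periodization does not affect convergence of the integrals in the statement, which is governed by $t\to0$ only.

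For the sufficiency I would estimate $\{c_k\}$ blockwise. Parseval applied to $f(\cdot+\delta)-f(\cdot)$, with $\delta$ chosen of order $2^{-j}$ so that the factors $|e^{ik\pi\delta/a}-1|$ are bounded below on the block $2^j\le|k|<2^{j+1}$, gives
$$\Big(\sum_{2^j\le|k|<2^{j+1}}|c_k|^2\Big)^{1/2}\le C\,\omega(f;2^{-j})_2 .$$
Cauchy--Schwarz over this block (which carries $O(2^{j})$ terms) then yields $\sum_{2^j\le|k|<2^{j+1}}|c_k|\le C\,2^{j/2}\omega(f;2^{-j})_2$, and summing in $j$ bounds $\sum_k|c_k|$ by a constant multiple of $\sum_{j\ge0}2^{j/2}\omega(f;2^{-j})_2$, a dyadic sum whose finiteness is the Bernstein--Sz\'asz integral condition of the statement. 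For the necessity I would invoke the extra hypothesis that $|\widehat f(\frac{k\pi}{a})|=2a|c_k|$ is monotone: for a monotone sequence the chain of block estimates reverses up to absolute constants (a classical fact in the spirit of Hardy's inequalities), so $\sum_{j}2^{j/2}\omega(f;2^{-j})_2$ is in turn controlled by $\sum_k|c_k|$, and $f\in W_0$ forces the integral to converge.

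The refinement under $V_{[-a,a]}(f)<\infty$ needs only one additional ingredient, the interpolation inequality
$$\omega(f;h)_2^2\le \omega(f;h)\cdot h\cdot V_{[-a,a]}(f),$$
which follows from $|f(x+\delta)-f(x)|\le\omega(f;\delta)$ pointwise together with $\int_{\mathbb R}|f(x+\delta)-f(x)|\,dx\le\delta\,V_{[-a,a]}(f)$. Substituting it into the block bound replaces the factor $2^{j/2}\omega(f;2^{-j})_2$ by $C\sqrt{V_{[-a,a]}(f)}\,\sqrt{\omega(f;2^{-j})}$, and summation in $j$ now produces a series comparable to $\int_0^1 \frac{\sqrt{\omega(f;t)}}{t}\,dt$, the asserted sufficient condition for the bounded-variation subclass.

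Two points are genuinely delicate. The conceptual crux is the endpoint equivalence $\widehat f\in L_1\Leftrightarrow\{c_k\}\in\ell_1$: the inequality $\sum_k|c_k|\le C\|\widehat f\|_{L_1}$ is a Nikolskii-type local estimate valid for any separated nodes, but the reverse at $p=1$ must be read off from a reconstruction kernel $\psi$ whose transform is a smooth plateau equal to $1$ on $[-(a-\varepsilon),a-\varepsilon]$ and supported in $[-a,a]$; such a $\psi$ lies in $L_1(\mathbb R)$ precisely because of the oversampling gap $\varepsilon$, and this is where the hypothesis $\mathrm{supp}\,f\subset[-a+\varepsilon,a-\varepsilon]$ is essential. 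The laborious point is sharpness of the bounded-variation condition: here I would exhibit a function of bounded variation, supported in $[-a+\varepsilon,a-\varepsilon]$, for which $\int_0^1 \frac{\sqrt{\omega(f;t)}}{t}\,dt=\infty$ and yet the periodization has a non-absolutely-convergent Fourier series, by a lacunary construction of Salem--Zygmund type; the monotone necessity already settles sharpness of the first condition on the stated subclass.
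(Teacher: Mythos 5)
Your proof is correct, but it reaches the crucial reduction by a genuinely different road than the paper. The paper's argument is essentially a chain of citations: it invokes Trigub's criterion (Lemma \ref{L3}) to convert $f\in W_0$ into absolute convergence of the Fourier series of the $2a$-periodization (the vanishing of $f$ near $\pm a$ is precisely what allows one to drop the auxiliary function $xf(x)$ appearing there), and then quotes the Bernstein--Sz\'asz/Stechkin and Zygmund/Bochkarev theorems (Lemmas \ref{L4} and \ref{L5}) for the two integral conditions. You instead prove the reduction from scratch: $f\in W_0$ iff $\widehat f\in L_1$, and since $\widehat f$ is entire of exponential type $a-\varepsilon<a$, the oversampled nodes $\frac{k\pi}{a}$ give $\widehat f\in L_1\Leftrightarrow\{\widehat f(\frac{k\pi}{a})\}\in\ell_1$ via Plancherel--P\'olya in one direction and an $L_1$ reconstruction kernel (which exists only because of the gap $\varepsilon$) in the other. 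This is in effect a self-contained proof of exactly the special case of Lemma \ref{L3} that the paper uses, and it isolates correctly where the hypothesis ${\rm supp}\,f\subset[-a+\varepsilon,a-\varepsilon]$ enters; what it buys is independence from the 1974 reference, at the cost of importing Paley--Wiener/sampling machinery. Likewise, your dyadic Parseval blocks and the interpolation inequality $\omega(f;h)_2\le\sqrt{h\,\omega(f;h)\,V_{[-a,a]}(f)}$ reprove the sufficiency halves of Lemmas \ref{L4} and \ref{L5} rather than cite them (the paper records that same inequality only as a remark). The only places where you, like the paper, stop at citation-level sketches are Stechkin's reversal for monotone coefficients and Bochkarev's sharpness construction, so the two arguments stand on an equal footing of rigor there.
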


\begin{proof} The first step is to apply Theorem \ref{Th1} to the series $\sum\limits_{k=-\infty}^\infty f(k)e^{ikx}$.
We then use the following Lemmas \ref{L4} and \ref{L5}.

\begin{lemma}\label{L4} If a function $f$ is $2a$-periodic and belongs to $C(\mathbb R)$, then in order
that her Fourier series be absolutely convergent, it suffices, and if the absolute values of its Fourier coefficients
are monotone decreasing it is also necessary
$$\int_0^1 \frac{\omega(f;t)_2}{\sqrt{t}}\,dt<\infty.$$
\end{lemma}

\begin{proof} Bernstein and Szasz gave the following sufficient condition for the absolute convergence of Fourier series:

$$\sum\limits_{n=1}^\infty \frac{E_n(f)_2}{\sqrt{n}}<\infty,$$
where $$E_n(f)_2=\min\limits_{d_k}\left(\int_{-a}^a \left|f(x)-\sum\limits_{k=-n}^n d_k
e^{i\frac {k\pi x}{a}}\right|^2\,dx\right)^{\frac12}.$$
On the other hand, Stechkin proved
the necessity of this condition if the absolute values of Fourier coefficients are monotone decreasing. But the series and
the integral in question always converge simultaneously. For all the facts indicated, see \cite[Ch.IX, \S 2 and \S 8]{Br}.
\end{proof}

\begin{lemma}\label{L5} If $f$ is $2a$-periodic and $V_{[-a,a]}(f)<\infty$, then in order
that its Fourier series be absolutely convergent, it suffices

$$\int_0^1 \frac{\sqrt{\omega(f;t)}}{t}\,dt<\infty.$$
This condition is sharp in the sense that if $\int_0^1 \frac{\sqrt{\omega(t)}}{t}\,dt=\infty$, then there exists
$f$ such that $V_{[-a,a]}(f)<\infty$, $ \omega(f;h)\le\omega(h)$ and its Fourier series is not absolutely convergent.
\end{lemma}

The sufficient part is due to Zygmund (see, e.g., \cite[Vol.I, Ch.VI, 3.6]{Zg} or \cite[Ch.IX, \S 3]{Br}),
while the sharpness is due to Bochkarev \cite{B1}. Concerning the sufficiency, it is worth mentioning that
in this case $\omega(f;h)_2\le \sqrt{V_{\mathbb R}(f)h\omega(f;h)}$ (see, e.g., \cite[p.157]{TB}. See also \cite[6.4]{TB}.

The next step is to apply the following lemma.

\begin{lemma}\label{L3} (\cite[Th.7]{Tr74}) If $f\in C(\mathbb R)$, with ${\rm supp}\,f=[-a,a]$, then $f\in W_0$ if and only if
after extending it $2a$-periodically the two functions $f(x)$ and $f_1(x)=xf(x)$ can be expanded in absolutely
convergent Fourier series. If also $f$ vanishes in a neighborhood of $a$ or $-a$, then only one of these two functions
is enough to test the belonging to $W_0$.  \end{lemma}

Finally, applying again Theorem \ref{Th1}, we complete the proof.
\end{proof}

We note that in the case $V_{[-a,a]}(f)<\infty$ it suffices, for $h\to+0$,

$$\omega(f;h)=O\big(\frac1{\ln^{2+\varepsilon}\frac1h}\big).$$
For sufficient conditions on the behavior of a function near infinity, see, for example, the proof of Theorem \ref{P5}. 

\medskip

The following conditions for the trigonometric series to be a Fourier series are completely new.
If $\lim\limits_{|k|\to\infty}c_k=0$, we shall call the sequence $\{c_k\}$ a null-sequence.

\begin{theorem}\label{P5} Let the sequence of the coefficients of (\ref{sercf}) be a null-sequence.

1) If $\sum\limits_{m=1}^\infty \frac1m \left(\sum\limits_{n=m}^\infty \sup\limits_{|k|\ge n}|c_k|
\sup\limits_{|k|\ge n}|c_k-c_{k+1}|\right)^{\frac12}<\infty,$ then (\ref{sercf}) is a Fourier series.

2) If there is  $p\in (0,2]$ such that $\{c_k\}\in l_p$, then (\ref{sercf}) is the Fourier series of an $L_2$ function.
If $\{c_k\}\in l_p$ only for some $p\in (2,+\infty)$, then assuming $\{c_k-c_{k+1}\}\in l_q$, with $q\in\big(0,1+
\frac{1}{p-1}\big)$, we get that (\ref{sercf}) is a Fourier series.
\end{theorem}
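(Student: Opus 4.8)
The strategy throughout is to reduce each statement about the trigonometric series to a statement about the zigzag $\ell_c$, invoke Theorem \ref{Th1} to translate ``Fourier series'' into ``$\ell_c\in W_0$,'' and then apply the sufficient conditions for belonging to $W_0$ developed in Theorem \ref{P3} and its lemmas. First I would record the basic observation that since $\{c_k\}$ is a null-sequence, $\ell_c$ is a continuous function on $\mathbb R$ vanishing at infinity, so that the relevant moduli of continuity and summability quantities for $\ell_c$ are controlled by the corresponding discrete quantities of $\{c_k\}$; in particular $\omega(\ell_c;h)$ and the tail sums $\sup_{|k|\ge n}|c_k|$, $\sup_{|k|\ge n}|c_k-c_{k+1}|$ are essentially interchangeable because $\ell_c$ is piecewise linear with slopes $c_{k+1}-c_k$ on $[k,k+1]$.

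For part 1) the plan is to interpret the double sum as a condition on the behavior of $\ell_c$ near infinity and to match it with a Beurling-type criterion for membership in $W_0$. The inner sum $\sum_{n=m}^\infty \sup_{|k|\ge n}|c_k|\,\sup_{|k|\ge n}|c_k-c_{k+1}|$ is exactly the kind of product of $L_\infty$-tail and variation-tail that appears when one estimates the $L_2$-modulus of continuity via $\omega(f;h)_2^2\lesssim h\,\omega(f;h)\,V(f)$ (the inequality quoted after Lemma \ref{L5}); squaring, taking the square root, and weighting by $\frac1m$ is precisely the discretized form of $\int_0^1 \omega(\ell_c;t)_2\,t^{-1/2}\,dt<\infty$. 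So I would verify that the stated series is comparable to the Dini-type integral of Lemma \ref{L4} applied to a dyadic truncation of $\ell_c$, and then conclude $\ell_c\in W_0$. Here one must handle the behavior near infinity rather than on a fixed $[-a,a]$; I would either exhaust $\mathbb R$ by intervals $[-2^j,2^j]$ and sum the resulting contributions, or appeal directly to the tail version of the $W_0^*$-criterion suggested by the definition \eqref{W0star} and the remark ``for sufficient conditions on the behavior near infinity, see the proof of Theorem \ref{P5}.''

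For part 2) the two ranges of $p$ are handled separately. When $p\in(0,2]$ I would use the classical fact that $\{c_k\}\in\ell_2$ already places the series in $L_2(\mathbb T)$ by Riesz--Fischer, and $\ell_p\subset\ell_2$ for $p\le2$ gives the claim immediately; the $L_2$ membership is stronger than merely being a Fourier series. When $p\in(2,\infty)$ the sequence need not be square-summable, so being an $L_1$-Fourier series must be extracted from the joint hypothesis $\{c_k\}\in\ell_p$ and $\{c_k-c_{k+1}\}\in\ell_q$; the plan is again to estimate $\omega(\ell_c;t)_2$ by interpolating between the size information (from $\ell_p$) and the variation information (from $\ell_q$), producing a Dini integral that converges precisely when $q<1+\tfrac1{p-1}$, and then to invoke Lemma \ref{L4} together with Theorem \ref{Th1}. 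I expect the main obstacle to lie here: pinning down the exact exponent threshold $q<1+\tfrac1{p-1}$ requires a careful Hölder-type balancing of the $\ell_p$ and $\ell_q$ tails inside the $t^{-1/2}$-weighted integral, and checking that the endpoint behavior is the genuine borderline rather than an artifact of the estimate. The remaining steps---the translation through Theorems \ref{Th1} and \ref{P3}---are routine once this interpolation inequality is established.
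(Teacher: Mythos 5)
Your overall strategy --- pass to the zigzag $\ell_c$ and use Theorem \ref{Th1} (the Criterion) to convert the question into membership of $\ell_c$ in $W_0(\mathbb R)$ --- is exactly the paper's, and your treatment of the first half of part 2) ($\ell_p\subset\ell_2$ for $p\le2$, then Riesz--Fischer) is complete. But in the two substantive places your plan diverges from the paper and stalls precisely where the work is. The paper does not go through Lemma \ref{L4}, Lemma \ref{L5} or Theorem \ref{P3} at all: those are compact-support/periodic criteria, whereas both hypotheses of Theorem \ref{P5} are conditions on the behavior of $\ell_c$ near infinity. Instead the paper invokes two ready-made membership criteria for functions on the whole line. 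For part 1) it cites the test of \cite{LiTr1}: if $f$ is locally absolutely continuous, vanishes for $|x|\le 1$ and at infinity, and
$$\int_1^\infty\Bigl(\int_t^\infty \sup_{|u|\ge y\ge1}|f(u)|\cdot\operatornamewithlimits{ess\,sup}_{|u|\ge y\ge1}|f'(u)|\,dy\Bigr)^{\frac12}\,\frac{dt}{t}<\infty,$$
then $f\in W_0$; applied to $\ell_c$, whose derivative on $(k,k+1)$ is $c_{k+1}-c_k$, this discretizes verbatim into the stated double sum. For the second half of part 2) it cites \cite{rae}: $f\in L_p$, $f'\in L_q$ with $1<q<\infty$ and $\frac1p+\frac1q>1$ imply $f\in W_0$; since $\ell_c\in L_p(\mathbb R)$ when $\{c_k\}\in l_p$ and $\ell_c'\in L_q$ when $\{c_k-c_{k+1}\}\in l_q$, the threshold $q<1+\frac1{p-1}$ is just $\frac1p+\frac1q>1$ rewritten (with the case $q\le1$ handled by embedding into some $l_r$, $r>1$).

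Your proposed substitutes --- dyadically exhausting $\mathbb R$ and summing Bernstein--Sz\'asz contributions for part 1), and a H\"older-type interpolation of $\omega(\ell_c;t)_2$ between $l_p$ size and $l_q$ variation data for part 2) --- are not carried out, and you yourself flag the exponent bookkeeping as the ``main obstacle.'' That obstacle is the theorem: re-deriving the tail criterion of \cite{LiTr1} or the $L_p$--$L_q$ criterion of \cite{rae} is the entire content of those cited results, and nothing in your sketch shows that the dyadic sums or the balancing of exponents actually close at the stated thresholds rather than at weaker ones. As written, the proposal is a correct reduction plus two placeholders where the decisive membership lemmas should stand; identifying and quoting those two criteria (or proving them) is what is missing.
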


\begin{proof} We will prove the validity of each of the two conditions by using the corresponding known
conditions for belonging to $W_0$ to the function $\ell_c$ and applying the criterion given in the introduction.

1) The assertion readily follows from the following sufficient conditions of belonging to $W_0$ in \cite{LiTr1}:
$f$ is locally absolutely continuous on $\mathbb R$, $f(x)=0$ if $|x|\le1$, $\lim\limits_{|x|\to\infty}f(x)=0$, and

$$\int_1^\infty \left(\int_t^\infty \sup\limits_{|u|\ge y\ge1}|f(u)|\,  \operatornamewithlimits{esssup}\limits_{|u|\ge y\ge1}|f'(u)|\,dy
\right)^{\frac12}\,\frac{dt}t<\infty.$$
Applying these conditions to $\ell_c$ completes the proof of 1). In particular, it suffices $\sup\limits_{|k|\ge n}|c_k|
\sup\limits_{|k|\ge n}|c_k-c_{k+1}|=O\big(\frac {1}{n\log^{3+\varepsilon}n}\big)$. This becomes not to be sufficient for
$\varepsilon=0,$ as the example $\sum\limits_{k=1}^\infty \frac{\sin kx}{\log(k+1)}$ shows.

2) Taking into account that the sequence of the coefficients is bounded, we have in the first case that always $\{c_k\}\in l_2$.
For the second case, it is proven in \cite{rae} that if a function is in $L_p$, $1\le p<\infty$, and its derivative
is in $L_q$, $1<q<\infty$, then the function is in $W_0(\mathbb R)$ provided that $\frac1p+\frac1q>1$.
Applying this test to $\ell_c$ leads to the desired conclusion. One should only take into account that if $q\le1$, then the sequence
is in any $l_r$, with $r>1$. Choosing $r$ such that $\frac1p+\frac1r>1$, we complete the proof.
\end{proof}

We note that transferring as above the well known tests for belonging to $W_0$ due to Titchmarsh, Beurling or Carleman
(see \cite[Section 5]{LST}) to Fourier series gives nothing beyond what we already know.

\medskip

The following results are somewhat more restrictive, since they use the traditional assumption for the sequence to be of
bounded variation. We mention that in the preceding theorem there are no a priori claims, except the natural one that the terms
tend to zero. Further, we remind the basics on the Hilbert transforms.
For a (complex-valued) function $f\in L_1(\mathbb R)$, its Hilbert transform $\mathcal{H}f$ is defined by

\begin{align*}\mathcal{H}f(x)&:=\frac{1}{\pi}\,\mbox{\rm (P.V.)}
\int_{\mathbb R} f(x-u)\frac{du}{u}=\frac{1}{\pi}\,\mbox{\rm (P.V.)}
\int_{\mathbb R} \frac{f(u)}{x-u}\,du\\
&=\frac{1}{\pi}\lim\limits_{\delta\downarrow 0}\int_\delta^\infty
\{f(x-u)-f(x+u)\}\frac{du}{u},\quad  x\in\mathbb R.    \end{align*}
As is well known, for $f\in L_1(\mathbb R)$, this limit exists for
almost all $x\in \mathbb R$ but is not necessarily integrable. However, we always have $\|H_f\|_W=\|f\|_W$
(see \cite{LiTr1}). Analogously, for a sequence $c=\{c_k\}$, its discrete Hilbert transform $hc$ can be defined (there are other
equivalent definitions) as the sequence $\{hc_n\}$ so that

\begin{equation}\label{defdht} hc_n=\sum\limits_{k=-\infty}^\infty \frac{c_k}{n+\frac12-k}.\end{equation}
Some details and further references can be found in \cite[Ch.1, 1.3.4]{L2019}.

\begin{theorem}\label{T3h6} Let the sequence of the coefficients of (\ref{sercf}) be a $bv$ null-sequence.
Each of the following additional conditions:

1)  the discrete Hilbert transform  of the sequence $d=\{d_k\}:=\{c_{k+1}-c_k\}$ is summable, written $hd\in l_1$,

\noindent and

2) the discrete Hilbert transform $hc\in bv$,

\noindent guarantees for (\ref{sercf}) to be a Fourier series. \end{theorem}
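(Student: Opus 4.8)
The plan is to apply the Criterion from the introduction and reduce the statement to the single membership $\ell_c\in W_0(\mathbb{R})$. Since $\{c_k\}\in bv$ and $c_k\to0$ as $k\to\pm\infty$, the $c$-zigzag $\ell_c$ is of bounded variation on $\mathbb{R}$, vanishes at infinity, and is locally absolutely continuous with derivative the step function $g:=\ell_c'=\sum_k d_k\mathbf{1}_{(k,k+1)}$, where $d_k=c_{k+1}-c_k$. In particular $g\in L_1(\mathbb{R})$, with $\|g\|_{L_1}=\sum_k|d_k|=V_{\mathbb{R}}(\ell_c)<\infty$, and $\int_{\mathbb{R}}g=\ell_c(+\infty)-\ell_c(-\infty)=0$. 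Everything will be deduced from the way the Hilbert transform acts on such primitives.

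First I would note that assumption 2) implies assumption 1), so that only 1) needs a direct argument. Indeed, the discrete transform (\ref{defdht}) is the convolution $hc=\kappa*c$ with $\kappa_j=(j+\tfrac12)^{-1}$, and hence commutes with the shift and therefore with the difference operator $\Delta a=\{a_{k+1}-a_k\}$. Consequently $hd=h(\Delta c)=\Delta(hc)$, so that $\sum_n|hd_n|=\sum_n|(hc)_{n+1}-(hc)_n|=V(hc)$. Thus $hc\in bv$ is exactly $hd\in l_1$, and it remains to prove the theorem assuming 1).

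The analytic input for 1) is the behaviour of the continuous Hilbert transform on $W_0$. Its multiplier is $-i\operatorname{sgn}$, whence $\|\mathcal{H}f\|_W=\|f\|_W$ and $\mathcal{H}^2=-I$; more to the point, for a locally absolutely continuous $f$ vanishing at infinity with $f'\in L_1$ and $\int f'=0$, one has $f\in W_0$ as soon as $\mathcal{H}(f')\in L_1$. This is the real Hardy space criterion: $f'\in L_1$ together with $\mathcal{H}(f')\in L_1$ places $f'$ in $H^1(\mathbb{R})$, and Hardy's inequality then gives $\int_{\mathbb{R}}|\widehat{f'}(y)|\,|y|^{-1}\,dy<\infty$, i.e. $\widehat f(y)=\widehat{f'}(y)/(iy)\in L_1$, i.e. $f\in W_0$ (see also \cite{LiTr1}). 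Applying this to $f=\ell_c$, the theorem is reduced to the single implication $hd\in l_1\Longrightarrow \mathcal{H}g\in L_1(\mathbb{R})$.

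This last implication is the heart of the matter and the step I expect to be the main obstacle, because $\mathcal{H}g$ is only conditionally integrable and the required summability must come from cancellation among the $d_k$. Writing $g=\nu*\mathbf{1}_{(0,1)}$ with the Dirac comb $\nu=\sum_k d_k\delta_k$, one gets $\mathcal{H}g=(\mathcal{H}\nu)*\mathbf{1}_{(0,1)}$, where $\mathcal{H}\nu(x)=\pi^{-1}\sum_k d_k/(x-k)$ and the single-box kernel $\mathcal{H}\mathbf{1}_{(0,1)}(x)=\pi^{-1}\ln|x/(x-1)|$ decays only like $\pi^{-1}x^{-1}$, so a term-by-term $L_1$ bound is impossible. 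The bridge to the hypothesis is the sampling identity $\mathcal{H}\nu(n+\tfrac12)=\pi^{-1}hd_n$: the half-integer values of $\mathcal{H}\nu$ are precisely the discrete Hilbert transform. I would therefore split $\mathbb{R}$ into the unit cells centred at the half-integers, on each of which the principal-value singularity of $\mathcal{H}\nu$ averages out against $\mathbf{1}_{(0,1)}$, and compare $\int_{\mathrm{cell}}|\mathcal{H}g|$ with $\pi^{-1}|hd_n|$; the discrepancies should form an absolutely convergent series dominated by $\|d\|_{l_1}=V_{\mathbb{R}}(\ell_c)$, much as the periodization in the proof of Theorem~\ref{Th2}. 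Summing the cells would then yield $\|\mathcal{H}g\|_{L_1}\le C\big(\sum_n|hd_n|+\|d\|_{l_1}\big)<\infty$, hence $\ell_c\in W_0$ and, by the Criterion, (\ref{sercf}) is a Fourier series. The delicate points, namely the principal-value bookkeeping near the integers and the uniform control of the cell discrepancies, are exactly where the hypothesis $hd\in l_1$ is consumed.
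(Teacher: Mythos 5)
Your proposal follows essentially the same route as the paper: reduce 2) to 1) via the commutation identity $hd_n = hc_{n+1}-hc_n$, invoke the sufficient condition that integrability of the Hilbert transform of the derivative of a locally absolutely continuous function vanishing at infinity places it in $W_0$, and then compare the discrete Hilbert transform of $d=\{c_{k+1}-c_k\}$ with the continuous Hilbert transform of the step-function derivative of $\ell_c$ up to an $O(\|d\|_{l_1})$ error --- which is exactly the content of the paper's Lemma~\ref{htr}. The only cosmetic differences are the order (you dispose of 2) first) and your explicit $H^1$/Hardy-inequality justification of the sufficient condition, which the paper simply cites.
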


\begin{proof} The first assertion is straightforward. This result can hardly be considered
as a new one, maybe the language differs from that of preceding works; for numerous versions, see \cite[Ch.3]{IL}, \cite[Ch.8]{L2019}
and references therein. Really new results 1) and 2) in Theorem \ref{P5} are based on recent new conditions for $W_0(\mathbb R)$.
Assertion 2) of the present theorem is also completely new.

1) It is a consequence of a simple and effective sufficient condition for the integrability of a (locally)
absolutely continuous function, vanishing at infinity, to have integrable Fourier transform:
the Hilbert transform of the derivative should be Lebesgue integrable (see, e.g., \cite[Ch.2, 2.5]{L2019}).
Applying it to $\ell_c$ yields the summability of the discrete Hilbert transform of $\{c_{k+1}-c_k\}$ (the derivative of $\ell_c$)
almost immediately. The only thing to be checked is the relation between the discrete Hilbert transform of a summable sequence
$d=\{d_k\}$ and the Hilbert transform of the piecewise constant function $D(t)$ equal $d_k$ on $[k,k+1)$, $k\in\mathbb Z$.

\begin{lemma}\label{htr} Let $d=\{d_k\}$ be a summable null-sequence and let $D(t)$ equal $d_k$ on $[k,k+1)$, $k\in\mathbb Z$. Then

$$\sum\limits_{n=-\infty}^\infty |hd_n|=\int_{\mathbb R} |\mathcal{H}D(x)|\,dx+O\left(\sum\limits_{k=-\infty}^\infty |d_k|\right).$$
\end{lemma}

\begin{proof} The proof uses elementary calculations. One should only take into account that while estimating

$$\sum\limits_{n=-\infty}^\infty \int_n^{n+1}\left| \sum\limits_{k=n-2}^{n+2} \int_k^{k+1}\left[
\frac1{n+\frac12-k}-\frac1{x-t}\right]\,dt\right|\,dx,$$
the inner integral is considered in the principal value sense.
\end{proof}
\medskip

2) This is a kind of an analog of the following Hardy-Littlewood theorem (see, e.g., \cite[Vol.I, Ch.VII, (8.6)]{Zg}):
{\it If a periodic function and its conjugate are both of bounded variation, then their Fourier series converge absolutely.}
First of all, we prove the commutativity of the discrete Hilbert transform and the operation of taking the difference.
Indeed,

\begin{align*} hc_{n+1}-hc_n&=\sum\limits_{k=-\infty}^\infty \frac{c_k}{n+\frac32-k}-\sum\limits_{k=-\infty}^\infty \frac{c_k}{n+\frac12-k}\\
&=\sum\limits_{k=-\infty}^\infty \frac{c_{k+1}}{n+\frac12-k}-\sum\limits_{k=-\infty}^\infty \frac{c_k}{n+\frac12-k}\\
&=\sum\limits_{k=-\infty}^\infty \frac{c_{k+1}-c_k}{n+\frac12-k}=hd_n.\end{align*}
Since it follows from the assumption 2) that the left-hand side is summable, we
have that the discrete Hilbert transform of the sequence of differences is summable, and the assertion follows from 1).
\end{proof}

\section{Necessary conditions}\label{nec}

In this section we appeal to necessary conditions. We begin with a general one.

\begin{lemma}\label{L1} If $f\in W$ is represented by

$$f(x)=\int_{\mathbb R}e^{-ixt}\,dF(t),$$
then, for every $x\in\mathbb R$, the improper integral

$$\int_{\to+0}^{\to+\infty}\frac{f(x+u)-f(x-u)}u\,du=-\pi i\int_{\mathbb R}e^{-ixt}{\rm sign}\,t\,dF(t)$$
converges, and

$$\sup\limits_{x,\delta,M}\left|\,\int_\delta^M \frac{f(x+u)-f(x-u)}u\,du\right|<\infty.$$
The improper integral may not converge absolutely.  \end{lemma}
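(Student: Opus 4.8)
The plan is to insert the integral representation of $f$ into the difference quotient, interchange the order of integration, and reduce everything to the classical truncated sine integral. First I would compute, for $u>0$,
\[
f(x+u)-f(x-u)=\int_{\mathbb R}e^{-ixt}\big(e^{-iut}-e^{iut}\big)\,dF(t)=-2i\int_{\mathbb R}e^{-ixt}\sin(ut)\,dF(t),
\]
so that
\[
\int_\delta^M\frac{f(x+u)-f(x-u)}u\,du=-2i\int_\delta^M\int_{\mathbb R}e^{-ixt}\,\frac{\sin(ut)}u\,dF(t)\,du.
\]
For fixed $0<\delta<M$ the integrand is bounded in modulus by $1/\delta$ on $[\delta,M]\times\mathbb R$, which is integrable against $du\times|dF|$ since $V_{\mathbb R}(F)<\infty$; hence Fubini's theorem applies and I may swap the two integrals to obtain
\[
\int_\delta^M\frac{f(x+u)-f(x-u)}u\,du=-2i\int_{\mathbb R}e^{-ixt}\,\Phi(\delta,M,t)\,dF(t),\qquad \Phi(\delta,M,t):=\int_\delta^M\frac{\sin(ut)}u\,du.
\]

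The second step is to analyze $\Phi$. Substituting $v=ut$ shows that for $t>0$ one has $\Phi(\delta,M,t)=\int_{\delta t}^{Mt}\frac{\sin v}v\,dv$, while for $t<0$ the sign reverses and for $t=0$ it vanishes. Two classical facts about the sine integral are then decisive: the truncated integrals are uniformly bounded, $\sup_{0\le a<b}\big|\int_a^b\frac{\sin v}v\,dv\big|=:C<\infty$, and $\int_0^\infty\frac{\sin v}v\,dv=\frac\pi2$. Consequently $|\Phi(\delta,M,t)|\le C$ for all $\delta,M,t$, and, as $\delta\downarrow0$ and $M\uparrow\infty$, $\Phi(\delta,M,t)\to\frac\pi2\,{\rm sign}\,t$ pointwise in $t$.

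With these in hand both assertions follow from dominated convergence against the finite measure $|dF|$. Since $|e^{-ixt}\Phi(\delta,M,t)|\le C$ and the constant $C$ is $|dF|$-integrable, passing to the joint limit gives
\[
\lim_{\substack{\delta\downarrow0\\ M\uparrow\infty}}\int_\delta^M\frac{f(x+u)-f(x-u)}u\,du=-2i\int_{\mathbb R}e^{-ixt}\,\tfrac\pi2\,{\rm sign}\,t\,dF(t)=-\pi i\int_{\mathbb R}e^{-ixt}\,{\rm sign}\,t\,dF(t),
\]
which is the claimed identity, so in particular the improper integral converges. The same domination yields the uniform bound $\big|\int_\delta^M\frac{f(x+u)-f(x-u)}u\,du\big|\le 2C\,V_{\mathbb R}(F)$, independent of $x,\delta,M$.

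Finally, to see that absolute convergence may fail it suffices to exhibit one instance. Taking $dF$ to be the unit point mass at $t=1$ gives $f(x)=e^{-ix}$, $V_{\mathbb R}(F)=1$, and then $f(x+u)-f(x-u)=-2ie^{-ix}\sin u$, so $\frac{|f(x+u)-f(x-u)|}u=\frac{2|\sin u|}u$, whose integral over $(0,\infty)$ diverges. The only genuinely analytic input is the uniform boundedness of the truncated sine integral $\Phi$; everything else is Fubini and dominated convergence, so I expect that uniform bound, together with the pointwise limit $\frac\pi2\,{\rm sign}\,t$, to be the crux.
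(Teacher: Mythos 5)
Your argument for the convergence and the uniform bound is correct and is essentially the paper's: both insert the representation $f(x\pm u)=\int e^{-i(x\pm u)t}\,dF(t)$, reduce the truncated integral to $-2i\int_{\mathbb R}e^{-ixt}\bigl(\int_\delta^M\frac{\sin tu}{u}\,du\bigr)dF(t)$, invoke the uniform boundedness of the truncated sine integral, and pass to the limit by dominated convergence against the finite measure $|dF|$; you merely spell out the Fubini step and the change of variables $v=ut$ that the paper leaves implicit. Where you genuinely diverge is the last claim. You take $dF=\delta_1$, so $f(x)=e^{-ix}$ and $\int_0^\infty\frac{|f(x+u)-f(x-u)|}{u}\,du=\int_0^\infty\frac{2|\sin u|}{u}\,du=\infty$; this is a perfectly valid and much simpler witness that absolute convergence can fail, the failure occurring at $u\to\infty$. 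The paper instead builds $f_1(x)=\sum_k b_k\sin kx$ with $\sum b_k<\infty$ but $\varlimsup b_k\ln k=\infty$ and shows, via Dini's test and a divergence result for the Fourier series of $f_1(|x|)$ at $0$, that already $\int_0^1\frac{|f_1(u)-f_1(-u)|}{u}\,du=\infty$. That stronger local (near $u=0$) failure is what the authors actually need for the remarks following the lemma (the $o(1/\ln\frac1u)$ estimate on intervals of monotonicity and the Sidon-type examples), so your example proves the lemma as literally stated but would not support those later applications.
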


\begin{proof} We have

$$\int_\delta^M \frac{f(x+u)-f(x-u)}u\,du=-2i\int_{\mathbb R}e^{-ixt}\,dF(t)\int_\delta^M \frac{\sin tu}u\,du,$$
where the inner integral on the right-hand side is bounded by an absolute constant. Therefore, the Lebesgue dominated convergence
theorem allows the passage to the limit as $\delta\to+0$ and $M\to+\infty$. Let us now show that there is $f_1\in W$ such that

\begin{equation}\label{nac} \int_0^1 \frac{|f_1(u)|}u\,du=\frac12 \int_0^1 \frac{|f_1(u)-f_1(-u)|}u\,du=\infty.\end{equation}
For this, we consider the function

$$f_1(x)=\sum\limits_{k=1}^\infty b_k\sin kx, \qquad |x|\le\pi,$$
such that $b_k\ge0$, $\sum\limits_{k=1}^\infty b_k<\infty$, and $\overline{\lim\limits_{k\to\infty}}\, b_k\ln k=\infty$.
If for this function the left-hand side of (\ref{nac}) were finite, then, by Dini's test, the Fourier series
of $f_1(|x|)$ would converge at zero, which is not the case, as shown in \cite[pp. 29-30]{TB}. On the other hand,

$$\|f_1\|_W\le \sum\limits_{k=1}^\infty b_k \left\|\frac{e^{ik\cdot}-e^{-ik\cdot}}2\right\|_W= \sum\limits_{k=1}^\infty b_k<\infty,$$
which completes the proof.
\end{proof}

We mention that on the intervals of monotonicity of $f$, when the difference $f(x+u)-f(x-u)$ is also monotone in $u$,
the convergence of the indicated integral yields $o(\frac1{\ln\frac1u})$ for this difference, as $u\to+0$. This easily implies
 the existence of an even trigonometric series with the coefficients, monotonously tending to zero, which is not a Fourier
 series (the first such example was constructed by Sidon \cite{Si}) as well as the existence of an even function tending to zero
which does not belong to $W(\mathbb R)$.

We observe that if $f$ is odd, continuous and monotone near infinity, it follows from
Lemma \ref{L1} that the integral $\int_{\to0}^{\to\infty}\frac{f(x)}x\,dx$ converges.
This condition is also sufficient. To prove this, the similar condition for the Fourier series (see \cite[Vol.I, Ch.V, 1.14]{Zg}
and Theorem \ref{Th1} should be used. In particular, $f(x)=o\big(\frac1{\ln |x|}\big)$ as $|x|\to\infty$.
The answer is different for even functions from $W_0$, which can decay arbitrarily slowly. It is proved in
\cite{Sal} that in this case  $f(x-1)-f(x+1)=O\left(\frac1{\ln x}\right)$ as $x\to+\infty$.

Furthermore, if the function $f$ is convex (on the right of zero) or it is a difference of two
convex functions, the necessary condition is $\int_0^1 \frac{\omega(f;t)}t\,dt<\infty$. We mention that for the class of
convex functions and more advanced classes, there are asymptotic formulas more general than that in \cite[Ch.IX, \S 6]{Br},
see \cite[6.4.7 and 6.5.9]{TB} and \cite[Part I]{L2019}.

 Taking a concrete function $f(x)=|x|^\alpha\sin\frac{\pi^2}x$ if $|x|\le\pi$ and zero otherwise, we conclude that for $\alpha\in(\frac12,1]$,
we have $\widehat{f}(y)\asymp\frac1{|y|^{\frac{\alpha}2+\frac34}}$ for $|y|\to\infty$ (see \cite[p.117]{BLT}).

\medskip

In Chapter II, \S 9 of \cite{Br} one can find a nice discussion on Salem's necessary
conditions (see \cite{Sa}) for (\ref{fouser}) to be Fourier series. Salem's results say that necessary conditions for
$a_n$ and $b_n$ to be the cosine Fourier coefficients and the sine Fourier coefficients, respectively, are

\begin{eqnarray}\label{na} \lim\limits_{k\to\infty} k\sum\limits_{n=1}^\infty\frac{a_n} {(k+\frac12)^2-n^2}=0\end{eqnarray}
and

\begin{eqnarray}\label{nb} \lim\limits_{k\to\infty} \sum\limits_{n=1}^\infty\frac{nb_n} {(k+\frac12)^2-n^2}=0.\end{eqnarray}
A today glance at these relations allows one to immediately recognize in the expressions under the limit sign in (\ref{na}) and (\ref{nb})
the discrete even and odd Hilbert transforms, respectively. Of course, it was not the case in time of Salem's publication nor of Bary's.

The following arguments give enriched perception of these conditions. If in the odd case the coefficients
$b_k$ decrease monotone (without loss of generality, for all $k\ge1$), then, as is well known (see, e.g., \cite[Vol.I, Ch. V, 1.14]{Zg};
this can easily be derived from the above results), (\ref{fouser}) is a Fourier series if and only if $\sum\frac {b_k}{k}<\infty$.
This follows from Lemma \ref{L1} as well. A different matter is the even case: the coefficients may decay arbitrarily slow.
\medskip

Taking into account a possibility of relation between the Fourier expansion and Hilbert transform,
in much the same manner a necessary condition for a function to belong to Wiener's algebra has been obtained in \cite{Sal}.
It reads as follows: {\it If $f\in W_0(\mathbb R),$ then}

\begin{equation}\label{nW}\lim\limits_{|x|\to\infty}{\mathcal{H}}{f}(x)=0.\end{equation}
In fact, in \cite{Sal} a correct but somewhat misleading result
was formulated, with the so-called modified Hilbert transform rather than $\mathcal{H}f$.
The idea was that for a bounded (and continuous, which is the case for $f\in W_0(\mathbb R)$) function,
the usual Hilbert transform may not exist. However, for $f\in W_0(\mathbb R)$, its Hilbert transform
exists everywhere (see, e.g., \cite{LiTr1}). We shall now derive Salem's conditions from (\ref{nW})
in the following form.

\begin{proposition}\label{salfw} If (\ref{sercf}) is a Fourier series, then
the discrete Hilbert transform $hc_n$ exists for all $n=0,\pm1, \pm2,...,$ and

\begin{eqnarray}\label{sc} \lim\limits_{|n|\to\infty} hc_n=\lim\limits_{|n|\to\infty}
\sum\limits_{k=-\infty}^\infty \frac{c_k}{n+\frac12-k}=0.\end{eqnarray}
\end{proposition}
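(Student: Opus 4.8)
The plan is to reduce the statement about the discrete Hilbert transform $hc_n$ to the already-established vanishing of the continuous Hilbert transform on $W_0(\mathbb R)$, namely \eqref{nW}, read along the half-integer lattice. First I would invoke Theorem \ref{Th1}: since \eqref{sercf} is a Fourier series, there is $\phi\in W_0(\mathbb R)$ with $\phi(k)=c_k$; by Theorem \ref{Th2} the $c$-zigzag $\ell_c$ (which depends only on the values $c_k$) also lies in $W_0(\mathbb R)$. Hence its Hilbert transform exists everywhere and, by \eqref{nW}, $\mathcal H\ell_c(x)\to0$ as $|x|\to\infty$; in particular $\mathcal H\ell_c\big(n+\tfrac12\big)\to0$ as $|n|\to\infty$. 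The whole proof then rests on identifying $\mathcal H\ell_c\big(n+\tfrac12\big)$ with $hc_n$ up to a negligible correction.

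To make this identification I would write $\ell_c$ as a superposition of unit tents, $\ell_c(t)=\sum_k c_k\Lambda(t-k)$ with $\Lambda(t)=(1-|t|)_+$, and compute the Hilbert transform of a single tent explicitly. A direct evaluation gives
$$\pi\,\mathcal H\Lambda(x)=(1+x)\ln|x+1|+(x-1)\ln|x-1|-2x\ln|x|,$$
and expanding this for large $|x|$ yields $\pi\,\mathcal H\Lambda(x)=\frac1x+O(|x|^{-3})$. Evaluating at the half-integers $x=(n-k)+\tfrac12$, whose reciprocals are \emph{exactly} the kernel $\frac{1}{n+\frac12-k}$ of \eqref{defdht}, one obtains $\pi\,\mathcal H\Lambda\big((n-k)+\tfrac12\big)=\frac{1}{n+\frac12-k}+E(n-k)$, where the error sequence $E(m)$ is bounded for the finitely many small $m$ and is $O(|m|^{-3})$ for large $|m|$, hence $E\in\ell_1(\mathbb Z)$. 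Summing against $c_k$ gives, formally,
$$\pi\,\mathcal H\ell_c\big(n+\tfrac12\big)=\sum_k c_k\,\pi\,\mathcal H\Lambda\big((n-k)+\tfrac12\big)=hc_n+(E*c)_n.$$
Since $\{c_k\}$ is a null-sequence and $E\in\ell_1$, the convolution $(E*c)_n\to0$ as $|n|\to\infty$; this simultaneously shows that the series defining $hc_n$ converges (being the difference of the existing quantity $\pi\,\mathcal H\ell_c\big(n+\tfrac12\big)$ and the absolutely convergent $(E*c)_n$) and that $\lim_{|n|\to\infty}hc_n=0$, which is \eqref{sc}.

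The step I expect to be the main obstacle is the interchange of the Hilbert transform with the infinite sum over tents, because for a merely null (non-summable) sequence the tail integral $\int \ell_c(t)/(x-t)\,dt$ converges only conditionally: the crude bound $\sup_{|k|\ge N}|c_k|\int_{|t|\ge N}|x-t|^{-1}\,dt$ diverges, so termwise passage cannot be justified by absolute convergence. I would handle this exactly as in Lemma \ref{htr}, using the symmetric principal-value form $\mathcal H\ell_c(x)=\frac1\pi\lim_{\delta\downarrow0}\int_\delta^\infty\{\ell_c(x-u)-\ell_c(x+u)\}\frac{du}u$, in which the odd combination $\ell_c(x-u)-\ell_c(x+u)$ supplies the cancellation at infinity that renders the tail absolutely convergent and legitimizes the rearrangement into tents. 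The remaining computations — the explicit tent transform, its asymptotic expansion, and the $\ell_1$-bound on $E$ — are elementary.
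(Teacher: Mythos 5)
Your overall strategy coincides with the paper's: both proofs pass from the Fourier-series hypothesis to $\ell_c\in W_0(\mathbb R)$ via Theorems \ref{Th1} and \ref{Th2}, invoke \eqref{nW} to get $\mathcal H\ell_c\big(n+\tfrac12\big)\to0$, and then reduce the whole matter to showing that $\mathcal H\ell_c\big(n+\tfrac12\big)$ differs from $hc_n$ by a quantity tending to zero. Where you differ is in how this comparison is carried out. The paper works directly with the integrals $\int_k^{k+1}[c_k+(t-k)(c_{k+1}-c_k)]\frac{dt}{x-t}$, re-indexes, and estimates the resulting error sums by splitting at $[n/2]$. You instead compute $\mathcal H$ of a single unit tent in closed form, extract the asymptotics $\pi\mathcal H\Lambda(x)=\frac1x+O(|x|^{-3})$ (your formula and expansion check out), and package the discrepancy as the convolution of $\{c_k\}$ with an explicit kernel $E\in\ell_1(\mathbb Z)$; the vanishing of $(E*c)_n$ for a bounded null sequence is then standard. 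This is a cleaner and more quantitative organization of essentially the same computation, and it handles the existence of $hc_n$ for each fixed $n$ more transparently than the paper does.

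One justification needs repair. You claim that in the symmetric form $\int_\delta^\infty\{\ell_c(x-u)-\ell_c(x+u)\}\frac{du}{u}$ the odd combination ``renders the tail absolutely convergent.'' It does not: for a merely null function the difference $\ell_c(x-u)-\ell_c(x+u)$ can decay as slowly as $1/\ln u$, and Lemma \ref{L1} of the paper exhibits $f_1\in W$ for which precisely this symmetrized integral fails to converge absolutely. Fortunately the interchange you need does not require absolute convergence: the decomposition $\ell_c=\sum_k c_k\Lambda(\cdot-k)$ is locally finite, so the truncated principal-value integral over $\delta<|x-t|<M$ equals the symmetric partial sum $\sum_{|k-x|\le M}c_k\,\pi\mathcal H\Lambda(x-k)$ up to boundary contributions of size $O(1/M)$; letting $M\to\infty$ identifies $\pi\mathcal H\ell_c(x)$ with the tent series summed symmetrically. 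Since $\sum_k c_kE(n-k)$ converges absolutely, the remaining series $\sum_k\frac{c_k}{n+\frac12-k}$ then converges in the same symmetric (principal-value) sense, which is the sense in which $hc_n$ is to be understood anyway. With that substitution your argument is complete.
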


\begin{rem}\label{r1} {\rm In words, the necessary condition for the trigonometric series to be a Fourier series is the
same as that for a function to be in the Wiener algebra: the Hilbert transform vanishes at infinity.
Of course, in the case of series the transform is discrete, it is defined by (\ref{defdht}).
If one decides to represent (\ref{sercf}) in the form (\ref{fouser}), then (\ref{sc}) easily reduces to (\ref{na})
and (\ref{nb}). Similarly, in the case of $f$ even or odd, the condition in (\ref{nW}) reduces to

$$\lim\limits_{x\to\infty} x\int_0^\infty \frac{f(t)}{x^2-t^2}\,dt=0 \quad {\rm and}\quad
\lim\limits_{x\to\infty} \int_0^\infty \frac{tf(t)}{x^2-t^2}\,dt=0,$$
respectively (see again \cite[Ch.1, 1.3.4]{L2019}).}
\end{rem}

\begin{proof} With the criterion, given in the introduction, in hand, we continue with $\ell_c$. To understand what follows from the fact that
the Hilbert transform of $\ell_c$ vanishes at infinity, we consider the summands of the form

\begin{equation}\label{ders} \int_k^{k+1} [c_k + (t-k) (c_{k+1}-c_k)]\frac{dt}{x-t}.\end{equation}
Taking $x=n+\frac12$, we first estimate

\begin{equation}\label{ders1} \int_k^{k+1} (t-k)(c_{k+1}-c_k)\frac{dt}{x-t}.\end{equation}
Substituting $k+1\to k$ in the sum related to $c_{k+1}$ and then $t-1\to t$ in the integral
over $[k-1,k]$, we reduce summation of (\ref{ders1}) to

\begin{align*} &\sum\limits_{k=-\infty}^\infty c_k \int_k^{k+1} \frac{x-k+1}{(x-t)(x-t-1)}\,dt=
\sum\limits_{k=-\infty}^\infty \frac{c_k}{x-k}\\ +&\sum\limits_{k=-\infty}^\infty c_k
\int_k^{k+1} \frac{(t-k)(2x-k-t+1)}{(x-t)(x-k)(x-t-1)}\,dt.\end{align*}
The integrals for $k-1<x<k+1$ can be treated separately, taking into account that they are
understood in the principal value sense (if needed). With this in hand, we split the sum as

$$\left(\sum\limits_{k=-\infty}^{[n/2]}+\sum\limits_{k=[n/2]+1}^\infty\right) c_k\int_k^{k+1}
\frac{(t-k)(2x-k-t+1)}{(x-t)(x-k)(x-t-1)}\,dt.$$
The first sum here can be estimated by

$$\max|c_k|\sum\limits_{k=-\infty}^{[n/2]}\frac1{(n-k)^2}\le C\frac{ \max|c_k|}n,$$
which tends to zero as $n\to\infty$. The second one is bounded by

$$C\max\limits_{k>[n/2]}|c_k|,$$
which also tends to zero as $n\to\infty$. Here $C$ denotes absolute constants, not necessarily the same. Since

$$\int_k^{k+1} c_k \frac{dt}{x-t}=\frac{c_k}{x-k}+c_k\int_k^{k+1} \frac{t-k}{(x-t)(x-k)}\,dt,$$
and summing the integrals is treated as that above, we arrive at

$$\mathcal{H}\ell_c(x)=2\sum\limits_{k=-\infty}^\infty \frac{c_k}{x-k}+o(x).$$
This completes the proof.
\end{proof}

\section{Concluding remarks}

In fact, Fourier transform approaches are applied to the study of Fourier series for a long time, especially
in the problems of summability and the rate of convergence of the linear means of Fourier series; see \cite[Ch.7-9]{TB}.
One more area of application of such methods is the study of the almost everywhere convergence with indication
of the set of convergence. For example, the criterion of the summability at the Lebesgue points reduces to
checking whether the multiplier function which generates the summability method belongs to $W_0^*$ (\cite[8.1.3]{TB}; see the definition in (\ref{W0star})).
Recently, a general result for the summability at all the points of the differentiability of indefinite integral
has been proven in \cite{Tr16} in terms of belonging of certain functions to a Wiener algebra. For instance, it follows from this that
the Ces\'aro means converge at each Lebesgue point of an integrable function but may diverge at the differentiability points.

\end{document}